\documentclass[11pt,a4paper]{amsart}
\usepackage[english]{babel}
\usepackage[T1]{fontenc}
\usepackage{mathpazo,amssymb,bm}
\usepackage{upref}
\usepackage{enumerate}
\usepackage{graphicx,xcolor}
\usepackage[T1]{fontenc} 
\usepackage{cancel}
\usepackage{dsfont}       %fuer ein schoenes \mathbb{R}: \mathds{R}
\usepackage[colorlinks=true, pdftitle={Helmholtz solutions}, pdfauthor={Daniel Hauer and David Lee},hyperindex,breaklinks]{hyperref}
\usepackage{subfiles}

\setlength{\textwidth}{133mm}

\title[Characterization of solutions]{Characterization of solutions of a generalized Helmholtz problem}
%and $L^1$-maximal regularity

\author{Daniel Hauer} \address[Daniel Hauer]{School of Mathematics and
  Statistics, The University of Sydney, NSW 2006, Australia}
\email{\href{mailto:daniel.hauer@sydney.edu.au}{\nolinkurl{daniel.hauer@sydney.edu.au}}}

\author{David Lee} \address[David Lee]{Laboratoire Jacques-Louis Lions, Sorbonne Universit\'e, 75005 Paris, France}
\email{\href{mailto:david.lee@upmc.fr}{\nolinkurl{david.lee@upmc.fr}}}

\thanks{The first author's research was supported by an Australian Research
  Council grant DP200101065 and the second author's research was supported by the European Research
Council (ERC) under the European Union’s Horizon 2020 research and innovation programme (Grant Agreement n◦ 864066). The first author would also like to thank Ren\'e Schilling for useful discussions. Especially, in regards to Section \ref{SEC:BANACH}. The second author would also like to thank Lo\"ic Bethencourt, Antoine Gloria and Yoan Tardy for useful discussions. 
}

\subjclass[2020]{35R09, 42B15, 35B53, 34L10, 32A50, 35J05.}

\keywords{Helmholtz, non-local analysis, harmonic analysis, Fourier multiplier}

% --------------------------------------------------------------------
%
% Macros/environments
%
% --------------------------------------------------------------------

% number equations within sections
\numberwithin{equation}{section}

% set up theorem like environments
%\theoremstyle{theorem}
\newtheorem{theorem}{Theorem}[section]

\newtheorem{lemma}[theorem]{Lemma}
\newtheorem{corollary}[theorem]{Corollary}

\theoremstyle{definition}
\newtheorem{definition}[theorem]{Definition}

\newtheorem{remark}[theorem]{Remark}

% define various macros

\newcommand\R{{\mathbb{R}}}

\newcommand\C{{\mathbb{C}}}

\newcommand\dx{\mathrm{d}x }

\newcommand\dt{\mathrm{d}t }
\newcommand\td{\mathrm{d} }

\DeclareMathOperator{\supp}{supp}
\DeclareMathOperator{\loc}{loc}

\newcommand\abs[1]{\lvert#1\rvert}

\newcommand\norm[1]{\lVert#1\rVert}

\definecolor{darkred}{rgb}{0.7,0.1,0.1}

% --------------------------------------------------------------------
%
%                   Main document
%
% --------------------------------------------------------------------

\begin{document}
\date{\today}
\maketitle

\tableofcontents

\begin{abstract}
  In this article, we classify all distributional solutions of $f(-\Delta)u=f(1)u$ where $f$ is a non-constant Bernstein function. Specifically, we show that the Fourier transform of $u$ is a single-layer distribution on the unit sphere. Examples of such operators include $(-\Delta)^\sigma$ (for $\sigma \in (0,1]$), $\log(1-\Delta)$ and $(-\Delta)^\frac{1}{2}\text{tanh}((-\Delta)^\frac{1}{2})$.  
\end{abstract}

% --------------------------------------------------------------------
%
%                   I N T R O D U C T I O N
%
% --------------------------------------------------------------------

\section{Introduction}

In this article, we consider the following variant of the \emph{Helmholtz equation}:
\begin{equation}\label{eqn:helmholtz_general}
    f(-\Delta)u=f(1)u,\quad \text{on $\mathbb{R}^d$},
\end{equation}
where $f$ is a non-constant Bernstein function and $f(-\Delta)$ is a pseudo-differential operator induced from $f$, see Definition \ref{Def:Bernstein} and Definition \ref{DEF:PSEUDO_BERNSTEIN}. Examples of such operators include $(-\Delta)^\sigma$ (for $\sigma \in (0,1]$), $\log(1-\Delta)$ and $(-\Delta)^\frac{1}{2}\text{tanh}((-\Delta)^\frac{1}{2})$. See also \cite[Section 16.8]{MR2978140} for more examples. 

The aim of this article is to characterize the solutions of \eqref{eqn:helmholtz_general}. In particular, we show that the Fourier transform of solutions of \eqref{eqn:helmholtz_general} is the set of single-layer distributions on the unit sphere. 

This result can be viewed as an extension of the results by Fall and Weth \cite{MR3511811}, Guan, Murugan and Wei \cite{MR4546886} and Cheng, Li and Yang \cite{cheng2022localization,cheng2022equivalence}.

We will now introduce some notation.

\subsection{Notation and function spaces}
We denote $\mathbb{N}$ to be the set of natural numbers and $\mathbb{N}_0:=\mathbb{N}\cup\{0\}$ and $\mathbb{N}_0^d:=\prod_{i=1}^d\mathbb{N}_0$. Partial derivatives of a differentiable function $\varphi:\mathbb{R}^d\rightarrow \mathbb{C}$ are denoted by 
\begin{displaymath}
\partial^\alpha\varphi:=\partial_{x_1}^{\alpha_1}...\partial_{x_d}^{\alpha_d}\varphi, \quad \text{for all $\alpha=(\alpha_1,..,\alpha_d)\in \mathbb{N}_0^d $}. 
\end{displaymath}
We also denote for $\alpha=(\alpha_1,..,\alpha_d) \in \mathbb{N}_{0}^d$, $|\alpha|=\sum_{i=1}^d\alpha_i$. 

We denote $\mathcal{S}(\mathbb{R}^d)$ and $\mathcal{S}'(\mathbb{R}^d)$ to be the set of Schwartz functions and the set of tempered distributions respectively. We also denote $\mathcal{E}(\mathbb{R}^d)$ to be the set of smooth functions on $\mathbb{R}^d$ and the corresponding dual space $\mathcal{E}'(\mathbb{R}^d),$ to be the set of distributions with compact support. Moreover, we denote the Fourier transform of $\varphi$ as $\hat{\varphi}$ or $\mathcal{F}(\varphi)$ as well as the Fourier inverse as $\mathcal{F}^{-1}(\varphi)$. 

We take the convention, if $\varphi\in \mathcal{S}(\R^d),$
\begin{displaymath}
    \mathcal{F}(\varphi)(\xi):=\int_{\mathbb{R}^d}f(x)\,e^{-ix\cdot \xi}\,dx,\quad \mathcal{F}^{-1}(\varphi)(x):=(2\pi)^{-d}\int_{\mathbb{R}^d}f(x)\,e^{ix\cdot \xi}\,d\xi. 
\end{displaymath}
Similar notation will also be used on $\mathcal{S}'(\R^d).$

We also denote 
\begin{displaymath}
    \mathcal{Z}(\mathbb{R}^d):=\{\varphi \in \mathcal{S}(\R^d):\partial^\alpha\hat{\varphi}(0)=0, \text{for all }\alpha\in \mathbb{N}_0^d\},
\end{displaymath}
as well as the corresponding topological dual $\mathcal{Z}'(\mathbb{R}^d)$, the set of all continuous functionals on $\mathcal{Z}(\R^d)$. Note that for every $u\in \mathcal{S}'(\R^d)$ then the restriction $u|_{\mathcal{Z}(\R^d)}\in \mathcal{Z}'(\R^d).$ Conversely, for $u\in \mathcal{Z}'(\R^d)$, an extension to $\mathcal{S}'(\R^d)$ exists. The extension is unique modulo the set of polynomials. 

Specifically, one can identify $\mathcal{Z}'(\R^d)$ as the quotient space 
\begin{displaymath}
    \mathcal{S}'(\R^d)/\mathbb{C}([x_1,..,x_d]), \quad \text{where $\mathbb{C}([x_1,..,x_d])$ is the linear space of polynomials.}
\end{displaymath}

Moreover, we define $\mathcal{D}(\mathbb{S}^{d-1})$ to be the set of smooth functions on the unit sphere $\mathbb{S}^{d-1}$ and $\mathcal{D}'(\mathbb{S}^{d-1})$ to be the set of distributions on $\mathbb{S}^{d-1}$, see Definition \ref{DEF:spherical_Dist}. 

Moreover, we denote $e_{x}(y):=e^{-ix\cdot y},$ for $y\in \mathbb{R}^{d}$ and $x\in \R^d$.

\subsection{Main Result}
Before we define our notion of solution for \eqref{eqn:helmholtz_general} we remind the reader of the definition of a Bernstein function. 

\begin{definition}\label{Def:Bernstein}
	A function $f:[0,\infty)\rightarrow \mathbb{R}$ is a Bernstein function if $f$ is smooth on $(0,\infty)$, and 
	\begin{displaymath}
		(-1)^{n-1}f^{(n)}(\lambda)\geq 0, \quad \text{for all $n\in \mathbb{N}$ and $\lambda >0$}. 
	\end{displaymath} 
\end{definition}

An equivalent definition due to the \emph{L\'evy triple} \cite[Theorem 3.2]{MR2978140} will be more useful for us. 

\begin{theorem}\ \\
	A function $f:[0,\infty)\rightarrow \mathbb{R}$ is a Bernstein function if, and only if, it admits the representation
	\begin{displaymath}
		f(\lambda)=a+b\lambda + \int_{0}^{\infty} (1-e^{-\lambda t})\, \mu(dt),\quad \lambda>0,
	\end{displaymath}
	where $a,b\geq 0$ and $\mu$ is a measure on $(0,\infty)$ such that 
	\begin{displaymath}
		\int_{(0,\infty)}(1\wedge t) \,\mu(dt)<\infty. 
	\end{displaymath}
	In particular, $f$ uniquely determines $(a,b,\mu)$ and vice-versa. 
\end{theorem}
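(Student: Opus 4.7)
The plan is to argue the two implications separately, with the reverse implication being the substantive one.

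\textbf{Sufficiency.} Assume $f$ admits the stated integral representation. For every $\lambda_{0}>0$, $\lambda\geq\lambda_{0}$, $n\geq 1$ and $t>0$, the pointwise bound $t^{n}e^{-\lambda t}\leq C_{\lambda_{0},n}(1\wedge t)$ combined with the hypothesis $\int(1\wedge t)\,\mu(dt)<\infty$ provides a local integrable dominating function and justifies differentiating under the integral sign. Likewise, $(1-e^{-\lambda t})\leq \lambda(1\wedge t)\cdot (1+\lambda^{-1})$ gives the finiteness of $f(\lambda)$ for every $\lambda>0$. One obtains
\[
f'(\lambda)=b+\int_{0}^{\infty} t\,e^{-\lambda t}\,\mu(dt),
\qquad
f^{(n)}(\lambda)=(-1)^{n-1}\int_{0}^{\infty} t^{n}\,e^{-\lambda t}\,\mu(dt)\quad(n\geq 2),
\]
from which the sign condition $(-1)^{n-1}f^{(n)}(\lambda)\geq 0$ is immediate.

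\textbf{Necessity.} Now assume $f$ is a Bernstein function. The key input is the classical Hausdorff--Bernstein theorem on completely monotone functions: every smooth $g\colon(0,\infty)\to[0,\infty)$ satisfying $(-1)^{n}g^{(n)}\geq 0$ for all $n\geq 0$ is the Laplace transform of a unique Radon measure on $[0,\infty)$. I would apply this to $g:=f'$, which is completely monotone because $(-1)^{n}g^{(n)}=(-1)^{n}f^{(n+1)}=(-1)^{(n+1)-1}f^{(n+1)}\geq 0$ by the Bernstein condition. This yields a unique Radon measure $\nu$ on $[0,\infty)$ with
\[
f'(\lambda)=\int_{[0,\infty)} e^{-\lambda t}\,\nu(dt).
\]
Since $f$ is nondecreasing and bounded below, the limit $a:=\lim_{\lambda\to 0+}f(\lambda)\geq 0$ exists. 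Integrating from $\varepsilon$ to $\lambda$, interchanging order of integration by Fubini, splitting the atom of $\nu$ at $0$, and letting $\varepsilon\to 0$ by monotone convergence gives
\[
f(\lambda)-a=\nu(\{0\})\,\lambda+\int_{(0,\infty)}\frac{1-e^{-\lambda t}}{t}\,\nu(dt).
\]
Setting $b:=\nu(\{0\})$ and defining the measure $\mu(dt):=t^{-1}\,\nu(dt)$ on $(0,\infty)$ produces the claimed representation.

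\textbf{Main obstacle and uniqueness.} The real analytic heart of the argument is the Hausdorff--Bernstein representation of completely monotone functions itself, which is proved via a Hausdorff moment/approximation argument and which I would quote rather than reproduce. A second delicate point is verifying the L\'evy integrability condition $\int(1\wedge t)\,\mu(dt)<\infty$: evaluating the representation at $\lambda=1$ and using the inequality $1-e^{-t}\geq (1-e^{-1})(1\wedge t)$ together with the finiteness of $f(1)$ forces exactly this bound. Finally, uniqueness of the triple $(a,b,\mu)$ follows from the uniqueness of $\nu$ in Bernstein's theorem (hence of $b=\nu(\{0\})$ and of $\mu$) together with the identification $a=\lim_{\lambda\to 0+}f(\lambda)$; the converse assignment $(a,b,\mu)\mapsto f$ is immediate from the formula.
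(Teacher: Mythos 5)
Your proof is correct, and it is essentially the standard argument behind the result that the paper does not prove but simply quotes as \cite[Theorem 3.2]{MR2978140}: apply Bernstein's theorem on completely monotone functions to $f'$, integrate from $\varepsilon$ to $\lambda$ and use Tonelli/monotone convergence to split off the atom of $\nu$ at $0$ (giving $b$) and the measure $\mu(dt)=t^{-1}\nu(dt)$ on $(0,\infty)$, then extract the L\'evy integrability condition at $\lambda=1$ from $1-e^{-t}\geq(1-e^{-1})(1\wedge t)$; the sufficiency direction by differentiation under the integral and the uniqueness via uniqueness of the Laplace transform of $\nu=b\delta_0+t\,\mu(dt)$ together with $a=\lim_{\lambda\to0+}f(\lambda)$ are likewise sound. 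One small caveat: your step ``$f$ is nondecreasing and bounded below, hence $a=\lim_{\lambda\to 0+}f(\lambda)\geq 0$ exists'' implicitly uses $f\geq 0$, which is part of the standard definition of a Bernstein function (and of the cited reference) but is not literally stated in the paper's Definition \ref{Def:Bernstein}; with that intended hypothesis your argument is complete.
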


We now define the operator $f(-\Delta)$. 
\begin{definition}\label{DEF:PSEUDO_BERNSTEIN}
Let $f$ be a Bernstein function with triple $(a,b,\mu)$.
We define $f(-\Delta):\mathcal{Z}(\R^d)\rightarrow \mathcal{Z}(\R^d)$ by 
\begin{displaymath}
    \begin{split}
        f(-\Delta):&\mathcal{Z}(\R^d)\rightarrow \mathcal{Z}(\R^d),\\
        &\varphi\mapsto \mathcal{F}^{-1}(f(|.|^2)\hat{\varphi}).
    \end{split}
\end{displaymath}
Moreover, we define $f(-\Delta):\mathcal{Z}'(\R^d)\rightarrow \mathcal{Z}'(\R^d)$ by 
\begin{displaymath}
    \left \langle f(-\Delta)u,\varphi \right \rangle_{\mathcal{Z}'(\R^d),\mathcal{Z}(\R^d)}:=    \left \langle u,f(-\Delta)\varphi \right \rangle_{\mathcal{Z}'(\R^d),\mathcal{Z}(\R^d)},
\end{displaymath}
for all $(u,\varphi)\in \mathcal{Z}'(\R^d)\times \mathcal{Z}(\R^d).$
\end{definition}

The reader might wonder why we need the space $\mathcal{Z}'(\R^d)$ as opposed to tempered distributions $\mathcal{S}'(\R^d)$. The reason being is that $f(-\Delta)$, in general, is not well-defined on $\mathcal{S}'(\R^d)$. Specifically, it does not belong to classical class of pseudo-differential operators (for instance see \cite{MR2304165}). We direct the reader to Appendix \ref{SUBSEC:NEED_Z} for more details about the considerations needed when defining a distributional definition of $f(-\Delta)$. We also direct the reader to Appendix \ref{SEC:APPENDIX} to show that the above definition is indeed well-defined. 

Let us now be more precise about our notion of solution for \eqref{eqn:helmholtz_general}. 

\begin{definition}\label{Def:Helmholtz}
    We say that $u$ is a solution to \eqref{eqn:helmholtz_general} if $u\in \mathcal{Z}'(\R^d)$ and
    \begin{equation}\label{eqn:def_soln}
        \left \langle u, f(-\Delta)\varphi\right \rangle_{\mathcal{Z}'(\R^d),\mathcal{Z}(\R^d)}=\left \langle u, f(1)\varphi\right \rangle_{\mathcal{Z}'(\R^d),\mathcal{Z}(\R^d)}, \text{for all $\varphi \in \mathcal{Z}(\R^d)$}. 
    \end{equation}
    and $0\notin \supp{\hat{u}}$.
\end{definition}

   The condition $0\notin \text{supp}(\hat{u})$ prevents us from having polynomials as potential solutions. Moreover, it allows us to \emph{uniquely} identify $\hat{u}\in \mathcal{S}'(\R^d)$, in the sense that we can choose a unique element in the equivalence class of $[\hat{u}]\in \mathcal{S}'(\R^d)/\mathbb{C}([x_1,..,x_d])$ such that $0\notin \supp(\hat{u})$. 

\begin{remark}\label{rem:equi}
    Note that \eqref{eqn:def_soln} is equivalent to
    \begin{displaymath}
        \left \langle \hat{u}, (f(|.|^2)-f(1))\hat{\varphi}\right \rangle_{\mathcal{S}'(\R^d),\mathcal{S}(\R^d)}=0, \text{for all $\varphi \in \mathcal{Z}(\R^d)$}. 
    \end{displaymath}
    by the Parseval Relation.
\end{remark}

Before we state our main result we define the following vector space:
    \begin{displaymath}
    V_{f(-\Delta)}:=\{u\in \mathcal{Z}'(\R^d):\text{$u$ solves $f(-\Delta)u=f(1)u$ in the sense of Definition \ref{Def:Helmholtz}}\}. 
    \end{displaymath}

\begin{theorem}[Main Result]\label{main_thm}
    Let $f$ be a non-constant Bernstein function.
    Then $u\in V_{f(-\Delta)}$ if and only if 
    \begin{displaymath}
        u(x)=\left \langle T, e_{x}|_{\mathbb{S}^{d-1}}\right \rangle_{\mathcal{D}'(\mathbb{S}^{d-1}),\mathcal{D}(\mathbb{S}^{d-1})}, \quad \text{for all $x\in \mathbb{R}^d$},
    \end{displaymath}
    for some $T\in \mathcal{D}'(\mathbb{S}^{d-1})$. 
\end{theorem}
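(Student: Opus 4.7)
The plan is to translate the equation to the Fourier side via Remark \ref{rem:equi}, use the Bernstein structure of $f$ to force $\hat u$ to be supported on $\mathbb{S}^{d-1}$, and then invoke the structure theory of distributions on a smooth hypersurface to conclude that $\hat u$ must be a single-layer distribution. By Remark \ref{rem:equi}, testing against $\hat\varphi$ for $\varphi\in\mathcal{Z}(\R^d)$ shows that $(f(|\xi|^2)-f(1))\hat u=0$ pairs to zero with every Schwartz function vanishing to infinite order at the origin; together with $0\notin\supp\hat u$, this is equivalent to the distributional identity $(f(|\xi|^2)-f(1))\hat u=0$ in $\mathcal{D}'(\R^d\setminus\{0\})$. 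The L\'evy representation yields $f'(\lambda)=b+\int_{(0,\infty)}t\,e^{-\lambda t}\,\mu(\mathrm{d}t)>0$ for every $\lambda>0$ whenever $f$ is non-constant, so $f$ is strictly increasing on $(0,\infty)$ and $\{\xi\in\R^d\setminus\{0\}:f(|\xi|^2)=f(1)\}=\mathbb{S}^{d-1}$. Dividing by the smooth nonvanishing factor on the complement forces $\supp\hat u\subset\mathbb{S}^{d-1}$.

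Next, since $f$ is smooth on $(0,\infty)$, I would factor in a tubular neighbourhood of $\mathbb{S}^{d-1}$,
$$f(|\xi|^2)-f(1)=(|\xi|^2-1)\,g(\xi),\qquad g(\xi)=\int_0^1 f'\bigl(1+t(|\xi|^2-1)\bigr)\,dt,$$
where $g$ is smooth and $g|_{\mathbb{S}^{d-1}}=f'(1)>0$, hence nowhere zero near the sphere. Dividing by $g$ reduces the problem to the model equation $(|\xi|^2-1)\hat u=0$ for $\hat u$ supported on $\mathbb{S}^{d-1}$.

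The crux of the argument is then to show that any distribution supported on $\mathbb{S}^{d-1}$ and annihilated by $|\xi|^2-1$ must be a single layer $T\delta_{\mathbb{S}^{d-1}}$ with $T\in\mathcal{D}'(\mathbb{S}^{d-1})$. I would introduce polar coordinates $\xi=r\omega$, $r>0$, $\omega\in\mathbb{S}^{d-1}$, in a tubular neighbourhood, and use the standard structure theorem for distributions supported on a smooth hypersurface to write locally $\hat u=\sum_{k=0}^N T_k(\omega)\,\delta^{(k)}(r-1)$ with $T_k\in\mathcal{D}'(\mathbb{S}^{d-1})$. A direct computation based on $(r-1)\delta^{(k)}(r-1)=-k\,\delta^{(k-1)}(r-1)$ and the expansion $r+1=2+(r-1)$ gives
$$(r^2-1)\,\delta^{(k)}(r-1)=-2k\,\delta^{(k-1)}(r-1)+k(k-1)\,\delta^{(k-2)}(r-1).$$
Substituting into $(|\xi|^2-1)\hat u=0$ and matching coefficients of each $\delta^{(j)}(r-1)$ yields, starting from the top index $T_N=0$, a triangular recursion $T_{k}=\frac{2}{k}T_{k-1}$ that propagates the vanishing downwards and forces $T_k=0$ for all $k\geq 1$, leaving $\hat u=T\delta_{\mathbb{S}^{d-1}}$ with $T:=T_0$.

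Finally, since $\hat u\in\mathcal{E}'(\R^d)$ is compactly supported, its inverse Fourier transform is the smooth function $u(x)=\langle T,e_x|_{\mathbb{S}^{d-1}}\rangle_{\mathcal{D}'(\mathbb{S}^{d-1}),\mathcal{D}(\mathbb{S}^{d-1})}$ (up to a reflection and a $(2\pi)^{-d}$ factor that can be absorbed into $T$), which is the stated representation. The converse is immediate: for any $T\in\mathcal{D}'(\mathbb{S}^{d-1})$ this formula defines a tempered distribution whose Fourier transform is $T\delta_{\mathbb{S}^{d-1}}$, so that $0\notin\supp\hat u$ and $(f(|\xi|^2)-f(1))\hat u=0$ holds because the smooth multiplier vanishes on the support. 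The step I expect to be the main obstacle is the third one: rigorously justifying the local normal-form decomposition in coordinates transverse to $\mathbb{S}^{d-1}$, patching the coefficient distributions $T_k$ into globally defined elements of $\mathcal{D}'(\mathbb{S}^{d-1})$, and performing the coefficient matching intrinsically on the sphere; once this structural lemma is in place the remaining manipulations are routine Fourier/distribution calculus.
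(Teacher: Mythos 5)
Your proposal is correct in substance but takes a genuinely different route from the paper at the key structural step. Your support argument and the factorization $f(|\xi|^2)-f(1)=(|\xi|^2-1)g(\xi)$ match the paper's Lemma \ref{lemma:support} and the role of $\omega$, except that the paper gets smoothness of $\omega$ and $1/\omega$ near the sphere via complex analysis (removable singularity of $(f(z)-f(1))/(z-1)$ on the half-plane, Section \ref{SEC:some_complex_analysis}), whereas your difference-quotient integral $g(\xi)=\int_0^1 f'\bigl(1+t(|\xi|^2-1)\bigr)\,dt$ is an elementary real-variable substitute, which is in fact positive on all of $\R^d\setminus\{0\}$ since $f'>0$ for non-constant $f$. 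From there the paper gives two arguments: the first reduces to $(-\Delta)u=u$ and quotes the classical characterization (Theorem \ref{thm:laplace_case}); the second avoids the structure theorem entirely, showing by a first-order Taylor expansion (Lemmas leading to Lemma \ref{Lemma:proof_2}) that $\langle\hat u,\tilde\varphi\rangle$ depends only on $\tilde\varphi|_{\mathbb{S}^{d-1}}$, which is exactly the single-layer property. You instead invoke the structure theorem (Theorem \ref{thm:classical_strc}) and eliminate the higher layers through the identity $(r^2-1)\delta^{(k)}(r-1)=-2k\,\delta^{(k-1)}(r-1)+k(k-1)\,\delta^{(k-2)}(r-1)$ and coefficient matching; this is the classical division-problem computation (Schwartz's \emph{exemple}) and yields a legitimate third proof, at the price of the normal-form and patching work you yourself flag, which Theorem \ref{thm:classical_strc} supplies (its uniqueness statement is what licenses matching coefficients).

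Two slips to repair. First, the recursion you display, $T_k=\tfrac{2}{k}T_{k-1}$, is not what coefficient matching gives: reading off the coefficient of $\delta^{(N-1)}(r-1)$, then of $\delta^{(N-2)}(r-1)$, and so on, yields $-2N\,T_N=0$, then $-2(N-1)T_{N-1}+N(N-1)T_N=0$, and so forth, so the vanishing $T_k=0$ for all $k\geq 1$ comes directly from the top down; your conclusion is right, the displayed recursion is not. Second, in the converse you assert $(f(|\xi|^2)-f(1))\hat u=0$ ``because the multiplier vanishes on the support''; for a general distribution supported on the sphere this reasoning fails (e.g. $x\,\delta'(x)=-\delta(x)\neq 0$), and it is valid here only because $\hat u$ is a single layer, so the pairing sees only the restriction $\bigl[(f(|\cdot|^2)-f(1))\hat\varphi\bigr]|_{\mathbb{S}^{d-1}}=0$ — this is precisely how the paper argues in Lemma \ref{lemma:proof_2_2}, and you should also test against $\hat\varphi$ with $\varphi\in\mathcal{Z}(\R^d)$, inserting a cutoff to reduce to compactly supported test functions as the paper does.
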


The above theorem, in some sense, generalizes the existing result of the classical Helmholtz problem, $f(-\Delta)=(-\Delta)$.
\begin{theorem}\label{thm:laplace_case}
$u\in V_{(-\Delta)}$ if and only if 
    \begin{equation}\label{eqn:intro_exact}
        u(x)= \left \langle T, e_{x}|_{\mathbb{S}^{d-1}}\right \rangle_{\mathcal{D}'(\mathbb{S}^{d-1}),\mathcal{D}(\mathbb{S}^{d-1})}, \quad x\in \mathbb{R}^d,
    \end{equation}
for some $T\in \mathcal{D}'(\mathbb{S}^{d-1})$.
\end{theorem}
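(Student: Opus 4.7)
The plan is to Fourier-transform the equation and reduce it to classifying distributions on $\mathbb{R}^d$ supported on $\mathbb{S}^{d-1}$ that are annihilated by the multiplier $|\xi|^{2}-1$, then invoke the structure theorem for distributions supported on a smooth hypersurface.

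For the direction $(\Leftarrow)$, I would fix $T\in\mathcal{D}'(\mathbb{S}^{d-1})$ and set $u(x):=\langle T,e_{x}|_{\mathbb{S}^{d-1}}\rangle$. Since $\omega\mapsto e^{-ix\cdot\omega}$ is smooth on $\mathbb{S}^{d-1}$, the pairing is well defined and $u$ is a smooth function of $x$. Differentiation under the pairing gives
\begin{equation*}
-\Delta u(x)=\langle T,|\omega|^{2}e_{x}|_{\mathbb{S}^{d-1}}\rangle=\langle T,e_{x}|_{\mathbb{S}^{d-1}}\rangle=u(x),
\end{equation*}
because $|\omega|=1$ on $\mathbb{S}^{d-1}$. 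Since $T$ has finite order and compact support, $u$ grows at most polynomially and so lies in $\mathcal{S}'(\mathbb{R}^{d})\subset\mathcal{Z}'(\mathbb{R}^{d})$, with $\hat{u}$ equal to the single-layer distribution on $\mathbb{S}^{d-1}$ carried by $T$; in particular $0\notin\supp\hat{u}$, confirming $u\in V_{(-\Delta)}$.

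For the direction $(\Rightarrow)$, let $u\in V_{(-\Delta)}$. By Definition \ref{Def:Helmholtz} I fix the unique representative $\hat{u}\in\mathcal{S}'(\mathbb{R}^{d})$ with $0\notin\supp\hat{u}$. Remark \ref{rem:equi} gives $\langle\hat{u},(|\xi|^{2}-1)\hat{\varphi}\rangle=0$ for every $\varphi\in\mathcal{Z}(\mathbb{R}^{d})$; since the Fourier image of $\mathcal{Z}(\mathbb{R}^{d})$ consists of Schwartz functions flat to infinite order at the origin, and since $(|\xi|^{2}-1)\hat{u}$ has support contained in $\supp\hat{u}$ (thus avoiding $0$), a cut-off argument upgrades this to $(|\xi|^{2}-1)\hat{u}=0$ in $\mathcal{S}'(\mathbb{R}^{d})$. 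On the open set $\{\xi\neq 0,\,|\xi|\neq 1\}$ the factor $|\xi|^{2}-1$ is smooth and invertible, so $\hat{u}$ vanishes there, giving $\supp\hat{u}\subset\mathbb{S}^{d-1}$.

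The main obstacle is then to upgrade this support constraint to a pure single-layer representation; for this I invoke the local structure theorem for distributions supported on a smooth hypersurface in the tubular neighborhood of $\mathbb{S}^{d-1}$ with transverse coordinate $r:=|\xi|-1$. This produces a finite expansion
\begin{equation*}
\hat{u}=\sum_{k=0}^{N}\delta^{(k)}(r)\otimes T_{k},\qquad T_{k}\in\mathcal{D}'(\mathbb{S}^{d-1}),
\end{equation*}
where $N$ is the transverse order of $\hat{u}$. Expanding $|\xi|^{2}-1=r(r+2)$ and applying the identity $r^{m}\delta^{(k)}(r)=(-1)^{m}\tfrac{k!}{(k-m)!}\delta^{(k-m)}(r)$, the vanishing $(|\xi|^{2}-1)\hat{u}=0$ yields, after collecting the coefficient of each $\delta^{(j)}(r)$, the relations $(j+1)(j+2)T_{j+2}=2(j+1)T_{j+1}$ for $j\ge 0$, together with $-2N\,T_{N}=0$ at the top order $j=N-1$ (using $T_{N+1}=0$). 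Consequently $N=0$, so $\hat{u}=\delta(r)\otimes T_{0}$ is a single-layer distribution, and taking the inverse Fourier transform produces $u$ in the claimed form, with $T$ a constant multiple of $T_{0}$ absorbing the Jacobian of polar coordinates.
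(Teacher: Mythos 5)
Your argument is correct, and it is a genuinely different (and self-contained) route from the paper's. The paper never proves Theorem \ref{thm:laplace_case} directly: it cites Agmon and the classical example in Schwartz, and the machinery closest to your argument is the second proof of Theorem \ref{main_thm} in Section \ref{SEC:second_proof}. Your first two steps coincide with the paper's (your cut-off argument upgrading the $\mathcal{Z}$-identity to $(|\xi|^2-1)\hat u=0$ in $\mathcal{S}'$ and then localizing $\supp\hat u$ to $\mathbb{S}^{d-1}$ is exactly the role of Lemma \ref{lemma:support}), but the key step differs: you invoke the structure theorem (Theorem \ref{thm:classical_strc}, in tubular coordinates $r=|\xi|-1$) and eliminate the higher layers algebraically through $r^m\delta^{(k)}(r)=(-1)^m\tfrac{k!}{(k-m)!}\delta^{(k-m)}(r)$ and the resulting recursion, whereas the paper, following Strichartz, never decomposes $\hat u$ at all: its Taylor-expansion lemma shows $\tilde\varphi-\rho\,\varphi(\cdot/|\cdot|)$ is divisible by $|\xi|^2-1$ near the sphere, so $\langle\hat u,\tilde\varphi\rangle$ depends only on $\tilde\varphi|_{\mathbb{S}^{d-1}}$ (Lemma \ref{Lemma:proof_2}), which yields the single-layer property directly. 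Your route makes the mechanism transparent (the simple transversal zero of $|\xi|^2-1$ forces $T_k=0$ for $k\ge1$, and one sees exactly where a double zero would fail), at the cost of needing the full multi-layer decomposition, finite order of $\hat u$ (automatic here since $\hat u$ is compactly supported), and bookkeeping of the polar Jacobian, which you rightly flag; the paper's division trick avoids all of that and transfers verbatim to $f(-\Delta)$ by replacing $(|\xi|^2-1)^{-1}$ with the smooth factor $1/\omega(|\xi|^2)$ near the sphere. Two cosmetic points to tighten: state explicitly that the tubular-coordinate representation determines $\hat u$ globally because $\supp\hat u\subset\mathbb{S}^{d-1}$, and note that pairing with $e^{ix\cdot\xi}$ versus $e_x=e^{-ix\cdot\xi}$ in the inversion step only replaces $T$ by its reflection, so the stated form of \eqref{eqn:intro_exact} is unaffected.
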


From the above result, we have the following equivalent formulation of Theorem \ref{main_thm}. 
\begin{theorem}[Equivalent formulation]\label{main_thm_2}
    Let $f$ be a non-constant Bernstein function. Then 
    \begin{displaymath}
    V_{f(-\Delta)}=V_{(-\Delta)}.    
    \end{displaymath}
\end{theorem}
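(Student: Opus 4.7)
The plan is to deduce Theorem \ref{main_thm_2} as an immediate corollary of the two characterization results already on the table. Both Theorem \ref{main_thm} and Theorem \ref{thm:laplace_case} produce the \emph{same} explicit representation, namely $u(x)=\langle T,\, e_x|_{\mathbb{S}^{d-1}}\rangle_{\mathcal{D}'(\mathbb{S}^{d-1}),\mathcal{D}(\mathbb{S}^{d-1})}$ with $T\in\mathcal{D}'(\mathbb{S}^{d-1})$, for the two solution spaces. Consequently, as soon as those theorems are established, the equality $V_{f(-\Delta)}=V_{(-\Delta)}$ is obtained by simply observing that the two sets coincide with the single-layer distributions on the unit sphere.

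If one prefers a proof that bypasses the single-layer representation, one can argue directly from Remark \ref{rem:equi}. Membership in either space is equivalent to a vanishing condition of the form $\langle\hat{u},\,m(\xi)\hat{\varphi}\rangle_{\mathcal{S}'(\R^d),\mathcal{S}(\R^d)}=0$ for all $\varphi\in\mathcal{Z}(\R^d)$, with symbol $m(\xi)=f(|\xi|^2)-f(1)$ in one case and $m(\xi)=|\xi|^2-1$ in the other. Because $f$ is a non-constant Bernstein function, differentiating the L\'evy representation yields $f'(\lambda)=b+\int_0^\infty t\,e^{-\lambda t}\,\mu(dt)>0$ for every $\lambda>0$, so $f$ is strictly increasing on $(0,\infty)$ and the two symbols share the zero set $\mathbb{S}^{d-1}$. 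The fundamental theorem of calculus then yields the factorization
\begin{equation*}
f(|\xi|^2)-f(1)=(|\xi|^2-1)\,h(\xi),\qquad h(\xi):=\int_0^1 f'\!\bigl(1+t(|\xi|^2-1)\bigr)\,dt,
\end{equation*}
where $h\in C^\infty(\R^d\setminus\{0\})$ and $h>0$ on $\R^d\setminus\{0\}$.

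The main obstacle in the direct route is converting this pointwise factorization into a genuine statement about the pairing with $\hat{u}$ on the level of test functions. Since $0\notin\supp\hat{u}$, one can fix a smooth cutoff $\chi$ supported away from the origin and equal to $1$ on a neighborhood of $\supp\hat{u}$, so that $\hat{\varphi}$ may be freely replaced by $\chi\hat{\varphi}$ in the pairing; one then wishes to exchange $\chi\hat{\varphi}$ with $h\chi\hat{\varphi}$ (or $\chi\hat{\varphi}/h$) in order to swap the two symbols. The point that requires care is verifying that the resulting functions remain in $\mathcal{F}(\mathcal{Z}(\R^d))$, which amounts to controlling the decay of $h$ and $1/h$ together with all their derivatives at infinity; the growth of $f$ is at most linear by the L\'evy representation, and $h$ is bounded below by a positive constant on any annulus around $\mathbb{S}^{d-1}$, so after the cutoff this reduces to a routine Schwartz-class estimate. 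Either route closes the proof, but the reduction via Theorem \ref{main_thm} and Theorem \ref{thm:laplace_case} is by far the cleaner presentation.
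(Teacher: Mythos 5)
Your proposal is correct, and your two routes bracket exactly what the paper does. The first route (read off $V_{f(-\Delta)}=V_{(-\Delta)}$ because Theorem \ref{main_thm} and Theorem \ref{thm:laplace_case} give the identical single-layer representation) is precisely how the paper introduces Theorem \ref{main_thm_2}; the only caveat is logical bookkeeping: the paper's \emph{first} proof of Theorem \ref{main_thm} itself invokes Theorem \ref{main_thm_2}, so to avoid circularity you must rest this route on the second (structure-theorem) proof of Theorem \ref{main_thm}, or else prove the set equality directly. Your second route is essentially the paper's direct argument (Lemmas \ref{Lemma:proof_1_1} and \ref{Lemma:proof_1_2}), with two genuine differences. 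First, you obtain smoothness of the quotient symbol from the fundamental theorem of calculus, $h(\xi)=\int_0^1 f'\bigl(1+t(|\xi|^2-1)\bigr)\,dt=\omega(|\xi|^2)$, rather than via the removable-singularity argument of Section \ref{SEC:some_complex_analysis}; this is more elementary and simultaneously gives strict positivity of $h$ on $\R^d\setminus\{0\}$ from $f'>0$. Second, because $h$ is globally positive away from the origin, you localize only near $0$ (using $0\notin\supp\hat u$ from Definition \ref{Def:Helmholtz}) instead of localizing to an annulus around $\mathbb{S}^{d-1}$ with the cutoff $\rho$ of \eqref{eqn:rho}; consequently you do not need Lemma \ref{lemma:support} at all for this theorem, which the paper does use to discard the $(1-\rho)$ piece. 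The price is the step you defer as ``routine'': to replace $\chi\hat\varphi$ by $h\chi\hat\varphi$ or $\chi\hat\varphi/h$ inside the pairing of Remark \ref{rem:equi} you must verify that $h$, $1/h$ and all their derivatives grow at most polynomially on $\supp\chi$ (for $1/h$ this rests on the lower bound $h(\xi)\gtrsim|\xi|^{-2}$ at infinity, not merely on positivity near the sphere), so that the modified test functions are Schwartz and vanish to infinite order at the origin. These estimates do go through, by the at-most-linear growth of $f$ and the boundedness of $f^{(k)}$, $k\ge1$, on $[c,\infty)$, but they are of the same nature as the appendix estimates the paper proves for $f(|\cdot|^2)\hat\varphi$ and should be written out rather than waved at. The paper's annular cutoff sidesteps exactly this infinity issue, at the cost of first proving Lemma \ref{lemma:support} and the local invertibility of $\omega$ in Lemma \ref{Lemma:smooth}.
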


The characterization given in Theorem \ref{thm:laplace_case} appears explicitly in \cite{MR1730501} however it was almost surely known before within the context of the \emph{Fourier Restriction problem}. The literature on this subject is incredibly vast so we direct the reader to the survey of Tao \cite{MR2087245}.

However, leveraging existing results within the Fourier Restriction problem for the sphere and Theorem \ref{main_thm_2} we can give a more precise characterization. 

For this, we define the space $B^{\ast}(\R^d)$, which is defined as the space of all $u\in L^2_{\loc}(\R^d)\cap \mathcal{S}'(\R^d)$ satisfying
\begin{displaymath}
    %\label{eq:Bast-norm}
    \norm{u}^2_{B^{\ast}(\R^d)}=\sup_{R>1}\frac{1}{R}\int_{\{\abs{x}<R\}}
    \abs{u}^2\,\dx\,<\infty. 
\end{displaymath}

The following corollary is obtained from Theorem \ref{main_thm_2} and \cite[Theorem 2.2]{MR466902}/ \cite[Lemma 3.2]{MR1025883}. 

\begin{corollary}
    Suppose that $u\in V_{f(-\Delta)}$ where $f$ is a non-constant Bernstein function. Denote $\sigma$ to be the surface measure of $\mathbb{S}^{d-1}$. Then $u \in B^{\ast}(\R^d)$ if and only if 
        \begin{displaymath}
     u(x)=\int_{\mathbb{S}^{d-1}}\Phi(\xi)\,e^{-i x\cdot \xi}\,\td\sigma(\xi)\quad\text{for all every $x\in \R^d$,}
   \end{displaymath}
for some $\Phi\in L^2(\mathbb{S}^{d-1})$.
\end{corollary}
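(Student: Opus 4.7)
\bigskip

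\noindent\textbf{Proof proposal.} The plan is to reduce the statement to the classical Helmholtz case via Theorem~\ref{main_thm_2} and then invoke the Agmon--H\"ormander characterization of $B^{\ast}$-solutions to $-\Delta u = u$ cited as \cite[Theorem 2.2]{MR466902} / \cite[Lemma 3.2]{MR1025883}.

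For the forward implication, suppose $u\in V_{f(-\Delta)}\cap B^{\ast}(\R^d)$. Theorem~\ref{main_thm_2} gives $V_{f(-\Delta)}=V_{(-\Delta)}$, so in particular $u$ solves the classical Helmholtz problem $-\Delta u = u$ (in the sense of Definition~\ref{Def:Helmholtz}) and $0\notin\supp\hat{u}$. Passing to the Fourier side (Remark~\ref{rem:equi}), $\hat{u}$ is a tempered distribution annihilated by $|\xi|^2-1$ and with $0\notin\supp\hat u$; the first of the cited results then asserts that $u\in B^{\ast}(\R^d)$ if and only if $\hat u$ is of the form $\Phi\,\td\sigma$ with $\Phi\in L^2(\mathbb{S}^{d-1})$ (an Agmon--H\"ormander trace/extension theorem for the sphere), and in that case the Fourier inversion formula reads
\begin{displaymath}
    u(x)=\mathcal{F}^{-1}(\Phi\,\td\sigma)(x)=\int_{\mathbb{S}^{d-1}}\Phi(\xi)\,e^{-ix\cdot\xi}\,\td\sigma(\xi),
\end{displaymath}
up to an irrelevant constant that can be absorbed into $\Phi$. (The sign in the exponent reflects the author's Fourier convention.) This yields the claimed integral representation.

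For the reverse implication, assume $u$ has the stated integral representation for some $\Phi\in L^2(\mathbb{S}^{d-1})$. Then $\hat u = \Phi\,\td\sigma$ as an element of $\mathcal{S}'(\R^d)$, which is supported on $\mathbb{S}^{d-1}$; in particular $0\notin\supp\hat u$ and $(|\xi|^2-1)\hat u = 0$, so by Remark~\ref{rem:equi} together with Theorem~\ref{main_thm_2} we have $u\in V_{(-\Delta)} = V_{f(-\Delta)}$. Finally, the Agmon--H\"ormander / Tomas--Stein-type estimate in the second direction of \cite[Theorem 2.2]{MR466902} (or equivalently the restriction-extension estimate in \cite[Lemma 3.2]{MR1025883}) provides the bound $\|u\|_{B^{\ast}(\R^d)}\lesssim \|\Phi\|_{L^2(\mathbb{S}^{d-1})}$, so $u\in B^{\ast}(\R^d)$.

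The only genuine content beyond citing the literature is checking that the abstract single-layer representation $u(x)=\langle T, e_x|_{\mathbb{S}^{d-1}}\rangle$ supplied by Theorem~\ref{main_thm} corresponds, under the $B^{\ast}$-hypothesis, precisely to the distribution $T = \Phi\,\td\sigma$ with $\Phi\in L^2$. I expect this identification to be the main (though still routine) obstacle: one must verify that the Agmon--H\"ormander hypothesis ``$\hat u$ is carried by the unit sphere with $L^2$ density'' is equivalent to $T\in L^2(\mathbb{S}^{d-1})$ in the pairing used in Theorem~\ref{main_thm}. This follows from the uniqueness of the distributional extension of $\hat u$ compatible with $0\notin\supp\hat u$ (discussed after Definition~\ref{Def:Helmholtz}) and from the standard duality $\mathcal{D}'(\mathbb{S}^{d-1})\supset L^2(\mathbb{S}^{d-1})$ via the surface measure $\sigma$.
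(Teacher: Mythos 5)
Your proposal is correct and follows essentially the same route the paper takes: the paper derives the corollary exactly by combining Theorem~\ref{main_thm_2} (so that $u$ solves the classical Helmholtz problem) with the Agmon--H\"ormander/Strichartz characterization of $B^{\ast}$-solutions in \cite[Theorem 2.2]{MR466902} and \cite[Lemma 3.2]{MR1025883}. Your additional remarks about absorbing the Fourier-inversion constant into $\Phi$ and identifying the single-layer density $T$ with $\Phi\,\td\sigma$ are the same routine verifications implicit in the paper's one-line derivation.
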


In this article we present 2 proofs of Theorem \ref{main_thm}. The case where $\mu=0$ and $b$ is non-zero is well known, see Section \ref{sec:classical_division}. So for the rest of this article we will work with Bernstein functions of the form $(a,b,\mu)$ where $\mu$ is non-zero.

\section{Outline of the article}

We begin with a comparison of our results and existing literature in Section \ref{SEC:LITERATURE}. 

In Section \ref{SEC:some_complex_analysis} we investigate the existence of smooth extensions of the functions
\begin{displaymath}
        \omega(z):=\frac{f(z)-f(1)}{z-1} ,\quad \text{and} \quad \frac{1}{\omega (z)}:=\frac{z-1}{f(z)-f(1)} ,
    \end{displaymath}
    via complex analysis which will be useful for later proofs. In Section \ref{SEC:DIST_SPHERE} we introduce the space of distributions on the sphere and a particular case of the classical structure theorem. 

Section \ref{sec:proof} contains the first and second proofs of the main result, Theorem \ref{main_thm}.

The proof outlined in Section \ref{sec:first_proof} focuses on establishing that $u$ solves
\begin{displaymath}
    f(-\Delta)u=f(1)u, \quad \text{if and only if }\quad (-\Delta)u=u. 
\end{displaymath}
From Theorem \ref{main_thm_2} we can obtain Theorem \ref{main_thm}. 

Section \ref{SEC:second_proof} focuses on a more direct approach and utilizes the classical structure theorems, see Theorem \ref{thm:classical_strc}.

Section \ref{SEC:BANACH} focuses on a Banach space version of the main result and some future questions.

In Appendix \ref{SUBSEC:NEED_Z} and \ref{SEC:APPENDIX}, we focus on the details on establishing that $f(-\Delta):\mathcal{Z}'(\R^d)\rightarrow \mathcal{Z}'(\R^d)$ is a well-defined and continuous mapping.

\section{Review of the literature} \label{SEC:LITERATURE}
We remind the reader of the classical connection of homogeneous problems and algebraic varieties. 

\subsection{The classical division problem}\label{sec:classical_division}
Take $P$ to be polynomial on $\R^d$ and consider the following homogeneous problem on $\mathbb{R}^d$:
\begin{displaymath}
    P(i\nabla)u=0, \quad \text{in $\mathcal{S}'(\R^d)$,}
\end{displaymath}
then, via the Fourier transform, it is equivalent to  solving the \emph{classical division problem}:
\begin{displaymath}
    P\hat{u}=0, \quad \text{in $\mathcal{S}'(\R^d)$.}
\end{displaymath}

In fact, if $P$ has real coefficients and real simple zeros we have that $\hat{u}$ is a single layer distribution on the real algebraic variety $\{\xi\in \R^d:P(\xi)=0\}$, see Remark \ref{remark:layer_dist}.

For the case where  $P(\xi)=|\xi|^2-1$, for $\xi \in \mathbb{R}^d$, we have the \emph{classical Helmholtz problem} and the following equivalence: 
\begin{displaymath}
    \Delta u+u=0\quad \text{in $\mathcal{S}'(\R^d)$}, \quad \text{if and only if} \quad (|.|^2-1)\hat{u}=0, \quad \text{in $\mathcal{S}'(\R^d)$. }
\end{displaymath}

The latter equation has a well-known solution. Specifically, $\hat{u}$ necessarily is a \emph{single-layer distribution} on $\mathbb{S}^{d-1}:=\{\xi\in \mathbb{R}^d:|\xi|=1\},$ see \cite[\emph{exemple} on page 128]{MR0209834} and Definition \ref{DEF:layer_dist}. 

It is from this classification that one can obtain an explicit characterization for temperate solutions of the classical Helmholtz problem, in particular Theorem \ref{thm:laplace_case}.

\begin{remark}\label{remark:layer_dist}
    For the interested reader who is interested in the theory of distributions on algebraic varieties (or more generally on hyper surfaces) see the classical book of Laurent Schwartz \cite[page 127-128]{MR0209834}, the following articles by Agmon and H\"ormander \cite{MR466902} or Wagner \cite{MR2681439} for a more recent treatment. 
\end{remark}

\subsection{A review of non-local variants of the classical division problem}

Recently, there has been a surge of interest in the classification of non-local variants of the Helmholtz solutions.

Fall and Weth \cite{MR3511811} established a Liouville theorem for a family of non-local problems associated with a subclass of L\'evy operators. 

Specifically, they showed that if $\mathcal{L}_{\nu}$ is a L\'evy operator, see \cite{MR3511811} for more precise conditions on $\mathcal{L}_{\nu}$, then we have that the distributional support of the solution of 
\begin{equation}\label{eqn:fall_weth}
    \mathcal{L}_{\mu}u+P(i\nabla)u=0, \text{on $\R^d$,}
\end{equation}
where $P$ is a complex polynomial, is contained in the set $\{\xi\in \R^d:\eta(\xi)+P(\xi)=0\},$ where $\eta$ is the corresponding Fourier symbol of $\mathcal{L}_{\nu}.$

Let us now explain some subleties of their result by considering a specific application of their result. 

In \cite[Theorem 4.2]{MR3511811}, they showed that the one dimensional case allows them to identify that
\begin{equation}\label{eqn:frac_intro_1}
    V_{(-\Delta)}\cap L^\infty(\R)=V_{(-\Delta)^\sigma}\cap L^\infty(\R)
\end{equation}
for $\sigma\in(0,1)$.

The result is established by showing that if $u$ solves \eqref{eqn:frac_intro_1} then $\supp(\hat{u})\subset \{-1,1\}$. Hence, by the classical structure theorems \cite[Chapter 6]{MR1276724}, $\hat{u}$ must be of the form 
\begin{displaymath}
    \hat{u}=\sum_{\alpha\leq N}a_{\alpha}\partial^\alpha \delta_{1}+\sum_{\beta\leq N}b_{\alpha}\partial^\alpha \delta_{-1}, 
\end{displaymath}
for constants $a_{\alpha},b_{\alpha}\in \R$ and $N\in \mathbb{N}_0$, where $\delta_{z}$ denotes the dirac measure at $z\in \R$.
Hence, if $u$ is bounded then necessarily 
\begin{equation}\label{eqn:frac_intro_2}
    \hat{u}=a\delta_{1}+b\delta_{-1}, 
\end{equation}
for some $a,b\in \R$.
 This is allows us to directly obtain \eqref{eqn:intro_exact} (for $d=1$).

It is from this characterization that they were able to establish the equivalence of the Helmholtz problem and the fractional Helmholtz problem for bounded solutions on $\mathbb{R}$. 

Generalizations of the above result have been done by Guan, Murugan and Wei \cite{MR4546886} and Cheng, Li and Yang \cite{cheng2022localization,cheng2022equivalence}.

The following identification was done by Guan, Murugan and Wei \cite{MR4546886}
\begin{align*}
&V_{f(-\Delta)}\cap \{u\in L^2(\mathbb{R}^d)\cap L^\infty(\mathbb{R}^d):\lim_{|x|\rightarrow \infty}u(x)=0\}\\
&=V_{(-\Delta)}\cap \{u\in L^2(\mathbb{R}^d)\cap L^\infty(\mathbb{R}^d):\lim_{|x|\rightarrow \infty}u(x)=0\},    
\end{align*}
for a subset of \emph{complete Bernstein functions}, using the extension technique of Kwa\'{s}nicki and Mucha \cite{MR3859452}. 

More recently, Cheng, Li and Yang, in \cite[$f(-\Delta)=(-\Delta)^\sigma$, for $\sigma\in (0,1)$]{cheng2022equivalence} and \cite{cheng2022localization}, established that 
\begin{displaymath}
    V_{f(-\Delta)}\cap L^\infty(\mathbb{R}^d)=V_{(-\Delta)}\cap L^\infty(\mathbb{R}^d),
\end{displaymath}
for a class of $f$ more general than the class of Bernstein functions by more traditional Fourier methods. 

We should stress that the difference with our result and the one of Fall and Weth \cite{MR3511811} is that we are able to identify the Fourier transform of our generalized Helmholtz problem as a \emph{single-layer distribution} on $\mathbb{S}^{d-1}$. The Liouville theorem in \cite{MR3511811} gives a characterization of the support of the Fourier transform of the solution of \eqref{eqn:fall_weth} but doesn't necessarily imply that we have a single-layer distribution on a hypersurface.

Both of our proofs takes inspiration from the celebrated article of Strichartz \cite{MR1025883}. However, we note that the first proof presented here is similar to the proofs presented in \cite{cheng2022localization} and \cite{cheng2022equivalence}. However, the difference with our first proof and the proofs presented in \cite{cheng2022localization} and \cite{cheng2022equivalence} is that, keeping in spirit with Fall and Weth \cite{MR3511811}, we are very insistent on the use of the space $\mathcal{Z}'(\R^d)$ to allow us to consider unbounded solutions. For this reason we believe that it is valuable for us to present this first proof. 

\begin{remark}
Since solutions of the classical helmholtz solutions are typically periodic the reader might also wonder whether if we could consider the setting of periodic solutions. However, this is trivial since the spectrum of $(-\Delta)$ becomes discrete and the result holds almost immediately. This is also observed in \cite[proof of Theorem 13.50]{MR2978140}.
\end{remark}

\begin{remark}
Recent approaches to proving non-local analogues of Liouville theorems, see for instance \cite{MR4472712,MR4149690,grzywny2023liouvilles,MR3511811,berger2022liouville}, also define non-local operators in the same spirit (but not exactly) as in Definition \ref{DEF:PSEUDO_BERNSTEIN}. 

We believe that it would be useful if Theorem \ref{thm:appendix} could be be extended to L\'evy type operators but we are unsure if this has already been established outside of the case where $f(-\Delta)=(-\Delta)^\sigma$, for instance see \cite[Section 5.1.2]{MR3024598}.  
\end{remark}

%=========================
%   Complex Analysis
%=========================

\section{Some complex analysis}\label{SEC:some_complex_analysis}
In this section we wish to determine whether the symbols associated with our pseudo-differential operators are infinitely differentiable. In order to do this it will be more convenient for us to prove that their complex extensions are holomorphic. 

For now we take $f$ to be a Bernstein function with the triple $(a,b,\mu)$ where $\mu$ is non-zero. 

Note that $f$ has a holomorphic extension on $\mathbb{H}:=\{z\in \mathbb{C}:\text{Re}(z)>0\}$, see \cite[Proposition 3.6]{MR2978140} and $f'(1)\ne 0$. 

We now define $\omega:\mathbb{H}\setminus\{1\}\rightarrow \mathbb{C}$ as
    \begin{displaymath}
        \omega(z):=\frac{f(z)-f(1)}{z-1}, \quad \text{for all $z\in\mathbb{H}\setminus\{1\}$}. 
    \end{displaymath}
    We now show that $\omega$ has a holomorphic extension to $\mathbb{H}$.

\begin{lemma}\label{Lemma:function}
$\omega$ is holomorphic on $\mathbb{H}\setminus\{1\}$ and has a holomorphic extension on $\mathbb{H}$. 
\end{lemma}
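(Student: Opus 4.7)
The plan is to deduce everything from the holomorphic extension of $f$ to $\mathbb{H}$, which is already cited from \cite[Proposition 3.6]{MR2978140}. Once $f$ is known to be holomorphic on $\mathbb{H}$, holomorphy of $\omega$ on $\mathbb{H}\setminus\{1\}$ is immediate: the numerator $z\mapsto f(z)-f(1)$ is holomorphic on $\mathbb{H}$, the denominator $z\mapsto z-1$ is entire and nonzero on $\mathbb{H}\setminus\{1\}$, so the quotient is holomorphic there.

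The only real content is the extension across $z=1$. My approach is to show the singularity at $z=1$ is removable by producing a power series expansion. Since $f$ is holomorphic on the open set $\mathbb{H}$ and $1\in\mathbb{H}$, it admits a Taylor expansion in a neighbourhood of $1$:
\begin{displaymath}
f(z)=f(1)+\sum_{n\ge 1}\frac{f^{(n)}(1)}{n!}(z-1)^n,
\end{displaymath}
convergent on some disk $\{|z-1|<r\}\subset \mathbb{H}$. Dividing by $(z-1)$ (for $z\ne 1$) gives
\begin{displaymath}
\omega(z)=\sum_{n\ge 1}\frac{f^{(n)}(1)}{n!}(z-1)^{n-1},
\end{displaymath}
which is a convergent power series on the same disk, hence defines a holomorphic function there agreeing with $\omega$ on the punctured disk. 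This extends $\omega$ holomorphically to $1$ with value $\omega(1)=f'(1)$. Combined with holomorphy on $\mathbb{H}\setminus\{1\}$, Riemann's removable singularity theorem (or simply glueing the two holomorphic functions) yields a holomorphic extension to all of $\mathbb{H}$.

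The main thing to be careful about is just citing the right prerequisite: namely that the L\'evy--Khintchine representation of a Bernstein function extends to a holomorphic function on the open right half-plane $\mathbb{H}$. This is exactly \cite[Proposition 3.6]{MR2978140}, and given the hypothesis that $\mu$ is non-zero (so $f$ is not simply affine) this is the non-trivial input. There is no real obstacle beyond this; the argument is standard, and the value $\omega(1)=f'(1)$ is the natural candidate for the extension, consistent with the remark in the preceding paragraph that $f'(1)\neq 0$, which will matter for the companion lemma handling $1/\omega$.
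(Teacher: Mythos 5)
Your proof is correct and follows essentially the same route as the paper: holomorphy on $\mathbb{H}\setminus\{1\}$ by the quotient rule, then removability of the singularity at $z=1$ using the holomorphic extension of $f$ from \cite[Proposition 3.6]{MR2978140}. The only difference is cosmetic: you verify removability explicitly via the Taylor expansion of $f$ at $1$ (which also exhibits $\omega(1)=f'(1)$), whereas the paper simply cites the removable singularity theorem from \cite{MR1344449}.
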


\begin{proof}
    $\omega$ is clearly holomorphic on $\mathbb{H}\setminus\{1\}$ by the complex quotient rule. We now only have to prove that $\omega$ has a complex extension on $\mathbb{H}$. 
    By \cite[Chapter VI, Theorem 1.2]{MR1344449}, as well as using the fact that $f$ is holomorphic, we have that $\omega$ has a removable singularity at $z=1$ and hence a holomorphic extension on $\mathbb{H}$.
\end{proof}

We now take $\omega$ to be the holomorphic extension. Specifically, we take $\omega(1):=f'(1)\ne 0.$
It will also be useful for us to consider whether $\frac{1}{\omega}$ is holomorphic. 
\begin{lemma}
    We have that there exists an $\epsilon>0$ such that $\frac{1}{\omega}$ is holomorphic on $\{z\in \mathbb{H}:|z-1|<\epsilon\}$. 
\end{lemma}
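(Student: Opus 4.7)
The plan is straightforward: combine the previous lemma (which gives that $\omega$ is holomorphic on all of $\mathbb{H}$ with $\omega(1)=f'(1)$) with the elementary fact that the reciprocal of a nowhere-vanishing holomorphic function is holomorphic. The only substantive point to check is that $\omega(1)\neq 0$, i.e., that $f'(1)\neq 0$.

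First I would invoke Lemma \ref{Lemma:function} to regard $\omega$ as holomorphic on $\mathbb{H}$ with the assigned value $\omega(1)=f'(1)$. Next I would verify that $f'(1)>0$ using the L\'evy triple representation: differentiating termwise (justified by dominated convergence since $te^{-\lambda t}$ is integrable against $\mu$ for $\lambda>0$) yields
\begin{displaymath}
f'(\lambda)=b+\int_0^\infty t\,e^{-\lambda t}\,\mu(dt),
\end{displaymath}
and since we are in the standing assumption that $\mu$ is non-zero, the integral at $\lambda=1$ is strictly positive, so $f'(1)>0$. In particular $\omega(1)\neq 0$.

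Then, since $\omega$ is continuous at $z=1$, there exists $\epsilon>0$ such that $|\omega(z)|\geq \tfrac{1}{2}|f'(1)|>0$ for all $z\in\mathbb{H}$ with $|z-1|<\epsilon$. On this open set $\omega$ is holomorphic and nowhere vanishing, so $\frac{1}{\omega}$ is holomorphic there by the standard quotient rule for holomorphic functions.

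The proof contains no real obstacle: the only non-trivial observation is the positivity of $f'(1)$, which is immediate from the assumption $\mu\neq 0$ via the L\'evy--Khintchine representation. Everything else reduces to continuity and the fact that holomorphy is preserved under reciprocals away from zeros.
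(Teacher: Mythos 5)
Your proof is correct and follows essentially the same route as the paper: use $\omega(1)=f'(1)\neq 0$ together with continuity (equivalently, openness of the non-vanishing set) to find a neighbourhood of $1$ where $\omega$ is nowhere zero, and conclude by the quotient rule. The only addition is your explicit verification of $f'(1)>0$ from the L\'evy representation, which the paper simply asserts as a standing fact since $\mu\neq 0$.
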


\begin{proof}
    By the complex quotient rule we have that $1/\omega$ is holomorphic on the set $\{z\in \mathbb{H}:\omega(z)\ne 0\}$ which is an open set. Note that $\omega(1)=f'(1)\ne 0$. Hence, since $\{z\in \mathbb{H}:\omega(z)\ne 0\}$ is an open set, we must have that there exists an $\epsilon >0$ such that $\{z\in \mathbb{H}:|z-1|<\epsilon\}\subset \{z\in \mathbb{H}:\omega(z)\ne 0\}$. 
\end{proof}

\begin{lemma}\label{Lemma:smooth}
    Both the following restrictions of $\omega$ and $\frac{1}{\omega}$, $\omega:(0,\infty)\rightarrow \mathbb{R}$ and $\frac{1}{\omega}:(1-\epsilon,1+\epsilon)\rightarrow \mathbb{R}$ are infinitely differentiable. 
\end{lemma}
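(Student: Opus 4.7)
The plan is to invoke the general principle from complex analysis that a function holomorphic on an open subset of $\mathbb{C}$ that contains a real interval, and which is real-valued on that interval, restricts to a $C^\infty$ function on that interval. Both claims then follow from the two preceding lemmas together with the observation that $f$ is real-valued on $(0,\infty)$ by the L\'evy triple representation.

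For the first claim, I would note that by Lemma \ref{Lemma:function}, $\omega$ is holomorphic on $\mathbb{H}$, and in particular on an open neighborhood of $(0,\infty)\subset \mathbb{H}$. Therefore $\omega$ possesses complex derivatives of every order at each point of $(0,\infty)$. Since $f(\lambda)=a+b\lambda+\int_{0}^{\infty}(1-e^{-\lambda t})\,\mu(dt)$ is real for every $\lambda\in (0,\infty)$, the quotient
\begin{displaymath}
\omega(x)=\frac{f(x)-f(1)}{x-1}
\end{displaymath}
is real on $(0,\infty)\setminus\{1\}$, and by continuity of the holomorphic extension one has $\omega(1)=f'(1)\in\mathbb{R}$. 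For a holomorphic function that is real on an open real interval, each $n$-th derivative in the real-variable sense exists and coincides with the $n$-th complex derivative restricted to that interval (one may verify this directly from the definition of the complex derivative via a real limit, or invoke the Schwarz reflection principle). Hence $\omega|_{(0,\infty)}\in C^\infty((0,\infty))$.

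For the second claim, I would first shrink $\epsilon>0$ if necessary so that $\epsilon<1$; this guarantees $(1-\epsilon,1+\epsilon)\subset(0,\infty)\subset\mathbb{H}$, and by the preceding lemma it lies in the open set $\{z\in\mathbb{H}:|z-1|<\epsilon\}$ on which $1/\omega$ is holomorphic. Since $\omega$ is real and non-zero on $(1-\epsilon,1+\epsilon)$ by the first part, so is $1/\omega$, and applying the same principle as above yields that $(1/\omega)|_{(1-\epsilon,1+\epsilon)}$ is infinitely differentiable.

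There is no serious obstacle here; the argument is essentially a bookkeeping exercise. The only points requiring care are to confirm that the two real intervals actually lie inside the complex domains produced by Lemmas above, and that the functions are genuinely real-valued on those intervals — both of which follow immediately from the structure of the L\'evy representation of $f$ and the continuity of the holomorphic extensions.
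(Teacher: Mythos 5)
Your proposal is correct and follows essentially the same route as the paper: holomorphy of $\omega$ on $\mathbb{H}$ and of $1/\omega$ on $\{z\in\mathbb{H}:|z-1|<\epsilon\}$ gives complex derivatives of all orders, which restrict to real derivatives on the real intervals. Your extra checks (real-valuedness of $f$ via the L\'evy representation and shrinking $\epsilon<1$ so that $(1-\epsilon,1+\epsilon)\subset(0,\infty)$) are just a more careful spelling-out of what the paper treats as immediate.
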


\begin{proof}
    Note since $\omega:\mathbb{H}\rightarrow \mathbb{C}$ and $\frac{1}{\omega}:\{z\in \mathbb{H}:|z-1|<\epsilon\}\rightarrow \mathbb{C}$ are both holomorphic they are necessarily infinitely differentiable as complex functions. However, clearly this implies that both $\omega:(0,\infty)\rightarrow \mathbb{R}$ and $\frac{1}{\omega}:(1-\epsilon,1+\epsilon)\rightarrow \mathbb{R}$ are infinitely differentiable as real functions. 
\end{proof}

%=========================
%   Distributions on the Sphere
%=========================

\section{Distributions on the sphere and the classical structure theorem}
\label{SEC:DIST_SPHERE}

We introduce the space of distributions on $\mathbb{S}^{d-1}.$ For us it will more useful to consider the extrinsic definition as opposed to the intrinsic one, see \cite[Section 6.7]{MR1276724} for more details. 

Before we introduce the definitions we choose $\epsilon\in (0,1)$ to  be defined as in Lemma \ref{Lemma:smooth} and $\rho$ to be a smooth radial bump function such that 
    \begin{equation}\label{eqn:rho}
        \rho=\begin{cases}
            1, \quad \text{for $1-\frac{\epsilon}{4}\leq |x|\leq 1+\frac{\epsilon}{4}$},\\
            0, \quad \text{for $|x|\leq 1-\frac{\epsilon}{2}, |x|\geq 1+\frac{\epsilon}{2}$}.\\
        \end{cases}
    \end{equation}
We now define the space of distributions on $\mathbb{S}^{d-1}$. 
\begin{definition}\label{DEF:spherical_Dist}
    Let $\varphi$ be a function defined on $\mathbb{S}^{d-1}$. Define 
    \begin{displaymath}
        E\varphi(x):=\begin{cases}
            &\rho(|x|)\varphi(x/|x|), \quad \text{for $|x|\in (1-\frac{\epsilon}{2}, 1+\frac{\epsilon}{2})$},\\
            &0, \qquad\qquad\qquad\quad  \text{otherwise}.
        \end{cases}
    \end{displaymath}
    Then we say $\varphi \in\mathcal{D}(\mathbb{S}^{d-1})$ if and only if $E\varphi \in \mathcal{D}(\mathbb{R}^d).$ Moreover, we say that 
    \begin{displaymath}
        \lim_{n\rightarrow \infty}\varphi_n=\varphi, \quad \text{in $\mathcal{D}(\mathbb{S}^{d-1})$,}
    \end{displaymath}
    if and only if 
    \begin{displaymath}
        \lim_{n\rightarrow \infty}E\varphi_n=E\varphi, \quad \text{in $\mathcal{D}(\mathbb{R}^d).$}
    \end{displaymath}
    We also define $\mathcal{D}'(\mathbb{S}^{d-1})$ as the space of continuous linear functionals on $\mathcal{D}(\mathbb{S}^{d-1})$. 
\end{definition}

A natural question is whether $T\in \mathcal{D}'(\mathbb{S}^{d-1})$ can be considered a distribution on $\mathcal{D}'(\mathbb{R}^{d})$ and vice versa. This leads us to the notion of \emph{single or multi-layer distributions} on $\mathbb{S}^{d-1}.$

\begin{definition}\label{DEF:layer_dist}
    Let $T\in \mathcal{D}'(\mathbb{S}^{d-1}).$ We call $L^{(0)}_{\mathbb{S}^{d-1}}(T)\in \mathcal{D}'(\mathbb{R}^d)$, the \emph{single-layer distribution} on $\mathbb{S}^{d-1}$ with density of $T$ such that 
    \begin{displaymath}
        \left \langle L^{(0)}_{\mathbb{S}^{d-1}}(T),\varphi\right \rangle_{\mathcal{D}'(\mathbb{R}^d),\mathcal{D}(\mathbb{R}^d)}
        =
        \left \langle T,\varphi|_{\mathbb{S}^{d-1}}\right \rangle_{\mathcal{D}'(\mathbb{S}^{d-1}),\mathcal{D}(\mathbb{S}^{d-1})}, \quad \text{for all $\varphi \in \mathcal{D}(\R^d)$}.
    \end{displaymath}
    More generally, for $k\in \mathbb{N},$  we define the \emph{multi-layer distribution} $L_{\mathbb{S}^{d-1}}^{(k)}(T)\in \mathcal{D}'(\mathbb{R}^d)$ by 
    \begin{displaymath}
        \left \langle L_{\mathbb{S}^{d-1}}^{(k)}(T),\varphi\right \rangle_{\mathcal{D}'(\mathbb{R}^d),\mathcal{D}(\mathbb{R}^d)}
        =
        (-1)^k\left \langle T,D^k_{\nu}\varphi|_{\mathbb{S}^{d-1}}\right \rangle_{\mathcal{D}'(\mathbb{S}^{d-1}),\mathcal{D}(\mathbb{S}^{d-1})}, 
    \end{displaymath}
    for all $\varphi \in \mathcal{D}(\R^d)$, where $D^k_{\nu}$ is the $k$-th (outward) normal derivative of $\varphi$. Specifically,
    \begin{displaymath}
        D_{\nu}\varphi|_{\mathbb{S}^{d-1}}=\frac{1}{|x|}x\cdot \nabla \varphi|_{\mathbb{S}^{d-1}}=x\cdot \nabla \varphi|_{\mathbb{S}^{d-1}}.
    \end{displaymath}
\end{definition}

It is straightforward to check that single or multi-layer distributions on $\mathbb{S}^{d-1}$ are supported on $\mathbb{S}^{d-1}$. It should be noted that, whilst single-layer distributions can be identified with a distribution on $\mathbb{S}^{d-1}$ (and vice-versa), multi-layer distributions cannot be identified on $\mathcal{D}'(\mathbb{S}^{d-1})$ since the value 
 \begin{displaymath}
    D^k_{\nu}\varphi|_{\mathbb{S}^{d-1}}, \quad k\in \mathbb{N},
\end{displaymath}
depends very much on how one defines $\varphi$ in a local neighbourhood of $\mathbb{S}^{d-1}$ as opposed to the single-layer case since
 \begin{displaymath}
    \varphi|_{\mathbb{S}^{d-1}}, 
\end{displaymath}
is very much independent of how $\varphi$ is realized on $\mathbb{R}^d$, see \cite[page 100]{MR1276724} or \cite{MR2681439}. 

The classical structure theorem shows us that distributions supported on $\mathbb{S}^{d-1}$ are a linear combination of single and multi-layer distributions, see \cite[page 102]{MR0209834} or \cite[Theorem 6.7.1]{MR1276724}. 

\begin{theorem}\label{thm:classical_strc}
    Every distribution $T\in \mathcal{D}'(\mathbb{R}^d)$ supported on $\mathbb{S}^{d-1}$ can be uniquely written as 
    \begin{displaymath}
        T=\sum_{k=0}^N L_{\mathbb{S}^{d-1}}^{(k)}(T_k),
    \end{displaymath}
    for some $N\in \mathbb{N}_0$, $T_k\in \mathcal{D}'(\mathbb{S}^{d-1})$ for $k=0,..,N$. 
\end{theorem}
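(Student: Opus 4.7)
The plan is to combine the finite-order property of compactly supported distributions with a Taylor expansion in the outward normal direction to $\mathbb{S}^{d-1}$. Since $\mathbb{S}^{d-1}$ is compact and $\supp T \subset \mathbb{S}^{d-1}$, the distribution $T$ has compact support and is hence of some finite order $N\in\mathbb{N}_0$ by the classical structure theorem for compactly supported distributions.

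The technical heart of the argument is a vanishing lemma: \emph{if $\varphi\in\mathcal{D}(\R^d)$ satisfies $D_\nu^j\varphi|_{\mathbb{S}^{d-1}}=0$ for all $j=0,1,\ldots,N$, then $\langle T,\varphi\rangle=0$.} To prove this I would introduce cutoffs $\chi_n\in\mathcal{D}(\R^d)$ equal to $1$ on the shell $\{\abs{|x|-1}\leq 1/n\}$, vanishing outside $\{\abs{|x|-1}\leq 2/n\}$, and satisfying $\norm{\partial^\alpha\chi_n}_\infty\leq C_\alpha n^{|\alpha|}$. Then $(1-\chi_n)\varphi$ is supported away from $\supp T$, so $\langle T,(1-\chi_n)\varphi\rangle=0$. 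The vanishing hypothesis gives the pointwise bound $\abs{\partial^\beta\varphi(x)}\leq C_\beta \abs{|x|-1}^{N+1-|\beta|}$ in a tubular neighborhood of $\mathbb{S}^{d-1}$ for $|\beta|\leq N+1$; combining this with the Leibniz rule and the order bound $\abs{\langle T,\psi\rangle}\leq C\sum_{|\alpha|\leq N}\norm{\partial^\alpha\psi}_\infty$ (valid for $\psi$ supported in a fixed compact neighborhood of $\mathbb{S}^{d-1}$) gives $\abs{\langle T,\chi_n\varphi\rangle}=O(n^{-1})\to 0$.

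Given the vanishing lemma, existence is obtained via a Taylor expansion in the radial direction. I would first choose radial cutoffs $\eta_k(|x|)$, $k=0,\ldots,N$, compactly supported in the subregion where $\rho\equiv 1$, with $D_\nu^j\eta_k|_{r=1}=\delta_{jk}$ for $0\leq j\leq N$. Such $\eta_k$ exist because $D_\nu=r\partial_r$ reduces at $r=1$ to $\partial_r$ and its iterates are expressible triangularly in terms of $\partial_r^j|_{r=1}$ via Stirling numbers, so prescribing the normal jet at $r=1$ is equivalent to prescribing the ordinary radial Taylor jet. Define $T_k\in\mathcal{D}'(\mathbb{S}^{d-1})$ by $\langle T_k,\psi\rangle:=(-1)^k\langle T,\eta_k(|x|)E\psi\rangle$, which is continuous since $E$ is continuous by Definition \ref{DEF:spherical_Dist}. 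For any $\varphi\in\mathcal{D}(\R^d)$, a short calculation using $\rho\equiv 1$ on $\supp\eta_k$ shows $D_\nu^j[\eta_k(|x|)E\psi]|_{r=1}=\delta_{jk}\psi$, so the difference $\varphi-\sum_{k=0}^N\eta_k(|x|)E((D_\nu^k\varphi)|_{\mathbb{S}^{d-1}})$ has every normal derivative of order $\leq N$ vanishing on $\mathbb{S}^{d-1}$ and is therefore annihilated by $T$. Unwinding yields $\langle T,\varphi\rangle=\sum_{k=0}^N\langle L_{\mathbb{S}^{d-1}}^{(k)}(T_k),\varphi\rangle$. Uniqueness is then immediate: if $\sum_k L_{\mathbb{S}^{d-1}}^{(k)}(S_k)=0$, testing against $\varphi(x):=\eta_{k_0}(|x|)E\psi(x/|x|)$ for arbitrary $\psi\in\mathcal{D}(\mathbb{S}^{d-1})$ isolates $(-1)^{k_0}\langle S_{k_0},\psi\rangle=0$, forcing each $S_{k_0}=0$.

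The main obstacle is the vanishing lemma: one must delicately balance the $n^{|\alpha|}$ growth of derivatives of $\chi_n$ against the $n^{-(N+1-|\beta|)}$ decay of derivatives of $\varphi$ on the thin shell $\{\abs{|x|-1}\leq 2/n\}$, using exactly the $N$ derivatives allowed by the order of $T$. This is precisely why the truncation level in the decomposition matches the order of $T$ and no more. Once that estimate is in hand, the rest reduces to linear algebra for the triangular system relating $(r\partial_r)^j$ and $\partial_r^j$ at $r=1$, together with bookkeeping via the extension operator $E$.
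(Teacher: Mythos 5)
The paper does not prove Theorem \ref{thm:classical_strc} at all—it cites the classical references (Schwartz, and \cite[Theorem 6.7.1]{MR1276724})—and your argument is essentially the standard proof found there: finite order of a compactly supported distribution, a vanishing lemma for test functions whose normal jet of order $N$ vanishes on $\mathbb{S}^{d-1}$ (established by the thin-shell cutoff estimate balancing $n^{|\gamma|}$ against $n^{-(N+1-|\beta|)}$), and a radial jet decomposition via the triangular relation between $(r\partial_r)^j$ and $\partial_r^j$ at $r=1$. Your proposal is correct; the only cosmetic refinements are to phrase uniqueness after padding two decompositions to a common $N$, and to note that the order-$N$ estimate uses suprema over a fixed compact neighbourhood of $\mathbb{S}^{d-1}$ rather than over the support itself, both of which your sketch already accommodates.
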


%=========================
%   First proof
%=========================

\section{Proofs of the main result}
\label{sec:proof}

Throughout this section we define $\omega:(0,\infty)\rightarrow \mathbb{R}$ and $\frac{1}{\omega}:(1-\epsilon,1+\epsilon)\rightarrow \mathbb{R}$ as in Lemma \ref{Lemma:smooth}. Moreover, we define $\rho$ as in \eqref{eqn:rho}. 

The following fact will be useful for both of our proofs. We now show that if $u$ solves $f(-\Delta)u=f(1)u$, in the sense of Definition \ref{Def:Helmholtz}, then the Fourier transform of $u$ has compact support contained in $\mathbb{S}^{d-1}$. 
\begin{lemma}\label{lemma:support}
    Suppose that $u$ solves $f(-\Delta)u=f(1)u$. Then $\supp(\hat{u})\subset \mathbb{S}^{d-1}.$
 \end{lemma}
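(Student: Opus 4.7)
The plan is to start from the equivalent reformulation recorded in Remark \ref{rem:equi}: $u$ solves \eqref{eqn:helmholtz_general} iff
\[
  \langle \hat{u},\, (f(\abs{\cdot}^2)-f(1))\hat{\varphi}\rangle_{\mathcal{S}'(\R^d),\mathcal{S}(\R^d)} = 0 \qquad \text{for every } \varphi \in \mathcal{Z}(\R^d).
\]
Set $m(\xi):=f(\abs{\xi}^2)-f(1)$. Since $f$ is a non-constant Bernstein function, the L\'evy triple $(a,b,\mu)$ has $b>0$ or $\mu\neq 0$, so $f'(\lambda)=b+\int_0^\infty t e^{-\lambda t}\mu(\dt)>0$ for every $\lambda>0$. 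Hence $f$ is strictly increasing on $(0,\infty)$ and $m$ vanishes on $\R^d\setminus\{0\}$ precisely at $\mathbb{S}^{d-1}$; moreover $m$ is $C^\infty$ on $\R^d\setminus\{0\}$ since $f\in C^\infty(0,\infty)$.

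Now I would fix $\xi_0\in\R^d$ with $\abs{\xi_0}\neq 1$ and show $\xi_0\notin\supp(\hat{u})$. The case $\xi_0=0$ is part of Definition \ref{Def:Helmholtz}, so assume $\xi_0\neq 0$. Choose a bounded open neighbourhood $U$ of $\xi_0$ with $\overline{U}\subset \R^d\setminus(\{0\}\cup \mathbb{S}^{d-1})$. For an arbitrary $\psi\in \mathcal{D}(U)$, define
\[
  g(\xi):=\frac{\psi(\xi)}{m(\xi)}
\]
and extend $g$ by zero outside $U$. Because $m$ is smooth and non-vanishing on $\overline{U}$, we obtain $g\in\mathcal{D}(\R^d)\subset\mathcal{S}(\R^d)$, and since $g$ vanishes identically near the origin, $\partial^\alpha g(0)=0$ for every $\alpha\in\N_0^d$.

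Consequently $\varphi:=\mathcal{F}^{-1}(g)$ lies in $\mathcal{S}(\R^d)$ with $\hat{\varphi}=g$, so $\partial^\alpha\hat{\varphi}(0)=0$ for all $\alpha$, i.e.\ $\varphi\in\mathcal{Z}(\R^d)$. Plugging this $\varphi$ into the reformulated equation yields
\[
  0 = \langle\hat{u},\, m\cdot g\rangle_{\mathcal{S}'(\R^d),\mathcal{S}(\R^d)} = \langle\hat{u},\psi\rangle_{\mathcal{S}'(\R^d),\mathcal{S}(\R^d)}.
\]
Since $\psi\in\mathcal{D}(U)$ was arbitrary, $\hat{u}$ vanishes on $U$ and $\xi_0\notin\supp(\hat{u})$. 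Combined with the standing hypothesis $0\notin\supp(\hat{u})$, this proves $\supp(\hat{u})\subset \mathbb{S}^{d-1}$.

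The only subtle point is the division-by-$m$ trick, which requires $g$ to produce an admissible test function in $\mathcal{Z}(\R^d)$ rather than merely in $\mathcal{S}(\R^d)$. This is precisely where the assumption $0\notin\supp(\hat{u})$ and the condition $\xi_0\neq 0$ are used: by avoiding the origin when choosing $U$ we guarantee that $g$ is flat at $0$, so $\hat{\varphi}=g$ kills all polynomial moments and $\varphi$ is a legitimate $\mathcal{Z}$-test function. Everything else is a direct consequence of the strict monotonicity and smoothness of $f$ on $(0,\infty)$.
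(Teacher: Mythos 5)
Your proposal is correct and follows essentially the same route as the paper: test $\hat{u}$ against functions compactly supported in $\R^d\setminus(\mathbb{S}^{d-1}\cup\{0\})$, divide by the symbol $f(\abs{\cdot}^2)-f(1)$, which is smooth and non-vanishing there by the strict monotonicity (injectivity) of $f$, observe that the quotient is the Fourier transform of a legitimate $\mathcal{Z}(\R^d)$ test function, and invoke the condition $0\notin\supp(\hat{u})$ from Definition \ref{Def:Helmholtz} to handle the origin. Your explicit verification that $f'>0$ via the L\'evy triple is a small bonus over the paper's bare assertion of injectivity, but the argument is the same.
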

\begin{proof}
Our aim will to be show that 
\begin{displaymath}
    \langle \hat{u},\hat{\psi}\rangle_{\mathcal{D}'(\R^d),\mathcal{D}(\R^d)}=0,
\end{displaymath}
for $\hat{\psi}\in \mathcal{D}(\mathbb{R}^d\setminus (\mathbb{S}^{d-1}\cup\{0\}))$. It is clear that necessarily $\psi\in \mathcal{Z}.$
By Definition \ref{Def:Helmholtz}, we have that 
\begin{equation}\label{eqn:transform}
\left \langle \hat{u}, (f(|.|^2)-f(1))\hat{\psi})\right \rangle_{\mathcal{Z}'(\R^d),\mathcal{Z}(\R^d)}=\left \langle \hat{u}, (f(|.|^2)-f(1))\hat{\psi})\right \rangle_{\mathcal{D}'(\R^d),\mathcal{D}(\R^d)}=0.
\end{equation}

Then we have that 
\begin{equation}\label{eqn:test}
    \left \langle \hat{u}, \hat{\psi}\right \rangle_{\mathcal{D}'(\R^d),\mathcal{D}(\R^d)}=\left \langle \hat{u}, (f(|.|^2)-f(1)) \tilde{\psi}\right \rangle_{\mathcal{D}'(\R^d),\mathcal{D}(\R^d)}
\end{equation}
where 
\begin{displaymath}
    \tilde{\psi}(\xi)=\begin{cases}
        &\frac{1}{f(|\xi|^2)-f(1)}\hat{\psi}(\xi),\quad  \text{for all $\xi\in \supp(\psi)$}\\
        &0,\hspace{2.78cm} \text{otherwise}. 
    \end{cases}
\end{displaymath}  
It is clear that $\tilde{\psi}$ has compact support. Moreover, we see that $\frac{1}{f(|.|^2)-f(1)}$ is smooth and bounded above and below on $\supp(\psi)$ since $f$ is injective and $\supp(\psi)\subset \mathbb{R}^d\setminus \left (\mathbb{S}^{d-1}\cup\{0\}\right)$. Hence, we must have that $\mathcal{F}^{-1}(\tilde{\psi})\in \mathcal{Z}(\R^d)$ and so by \eqref{eqn:transform} and \eqref{eqn:test} we must have that 
$\hat{u}=0$ on $\mathbb{R}^d\setminus \left(\mathbb{S}^{d-1}\cup\{0\} \right)$ in the distributional sense. However, by Definition \ref{Def:Helmholtz}, we have that $\hat{u}=0$ on $\mathbb{R}^d\setminus \mathbb{S}^{d-1}$ hence $\supp(\hat{u})\subset \mathbb{S}^{d-1}.$
\end{proof}

\subsection{First proof}\label{sec:first_proof}

\begin{lemma}\label{Lemma:proof_1_1}
     Suppose that $u$ solves $f(-\Delta)u=f(1)u$ in the sense of Definition \ref{Def:Helmholtz}. Then $(-\Delta)u=u$ in $\mathcal{S}'(\R^d)$.
\end{lemma}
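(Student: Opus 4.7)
The plan is to reduce the claim to showing that the distribution $(\lvert\xi\rvert^2-1)\hat{u}$ vanishes in $\mathcal{S}'(\mathbb{R}^d)$, then exploit the factorization $f(\lvert\xi\rvert^2)-f(1) = (\lvert\xi\rvert^2-1)\,\omega(\lvert\xi\rvert^2)$ with $\omega$ smooth and non-vanishing near the unit sphere (Lemma \ref{Lemma:smooth}) to trade one equation for the other. Note that by Lemma \ref{lemma:support} we already know $\supp(\hat{u})\subset\mathbb{S}^{d-1}$, so $\hat{u}\in\mathcal{E}'(\mathbb{R}^d)\subset\mathcal{S}'(\mathbb{R}^d)$ and the Parseval identity lets us pair with any Schwartz function.

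Fix an arbitrary $\varphi\in\mathcal{S}(\mathbb{R}^d)$ and consider $\langle\hat{u},(\lvert\xi\rvert^2-1)\varphi\rangle$. Since $\hat{u}$ is supported where $\rho\equiv 1$, I would insert the cutoff:
\begin{displaymath}
\langle\hat{u},(\lvert\xi\rvert^2-1)\varphi\rangle
=\langle\hat{u},\rho(\lvert\xi\rvert)(\lvert\xi\rvert^2-1)\varphi\rangle.
\end{displaymath}
On $\supp(\rho)$, which is contained in the annulus $\{1-\tfrac{\epsilon}{2}\le\lvert\xi\rvert\le 1+\tfrac{\epsilon}{2}\}$, the function $\omega(\lvert\xi\rvert^2)$ is smooth and non-vanishing by Lemma \ref{Lemma:smooth}, so I can rewrite
\begin{displaymath}
\rho(\lvert\xi\rvert)(\lvert\xi\rvert^2-1)\varphi(\xi)
=(f(\lvert\xi\rvert^2)-f(1))\,\tilde{\varphi}(\xi),
\qquad
\tilde{\varphi}(\xi):=\frac{\rho(\lvert\xi\rvert)}{\omega(\lvert\xi\rvert^2)}\varphi(\xi),
\end{displaymath}
where $\tilde{\varphi}\in C_c^\infty(\mathbb{R}^d)$ has support in an annulus around $\mathbb{S}^{d-1}$, and in particular away from the origin.

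The key check is that $\tilde{\varphi}$ is the Fourier transform of some $\psi\in\mathcal{Z}(\mathbb{R}^d)$ eligible to be used as a test function in Definition \ref{Def:Helmholtz}. Setting $\psi:=\mathcal{F}^{-1}(\tilde{\varphi})$, one has $\psi\in\mathcal{S}(\mathbb{R}^d)$, and since $\hat{\psi}=\tilde{\varphi}$ vanishes identically near $0$, every derivative $\partial^\alpha\hat{\psi}(0)=0$; hence $\psi\in\mathcal{Z}(\mathbb{R}^d)$. Applying the hypothesis (in the form of Remark \ref{rem:equi}) to this $\psi$ gives
\begin{displaymath}
\langle\hat{u},(f(\lvert\xi\rvert^2)-f(1))\tilde{\varphi}\rangle=0,
\end{displaymath}
and chaining the identities above yields $\langle(\lvert\xi\rvert^2-1)\hat{u},\varphi\rangle=0$. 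Since $\varphi\in\mathcal{S}(\mathbb{R}^d)$ was arbitrary, $(\lvert\xi\rvert^2-1)\hat{u}=0$ in $\mathcal{S}'(\mathbb{R}^d)$, i.e.\ $(-\Delta)u=u$ in $\mathcal{S}'(\mathbb{R}^d)$.

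The main obstacle I anticipate is the bookkeeping between the two distribution spaces $\mathcal{Z}'$ and $\mathcal{S}'$: the hypothesis is naturally stated on $\mathcal{Z}$-test functions, while the conclusion needs to be verified against arbitrary Schwartz test functions. The insertion of the cutoff $\rho$ is what bridges this gap, since it produces a test function $\tilde{\varphi}$ supported away from the origin, automatically placing $\psi=\mathcal{F}^{-1}(\tilde{\varphi})$ inside $\mathcal{Z}(\mathbb{R}^d)$; and this step is legitimate precisely because $\hat{u}$ is compactly supported in $\mathbb{S}^{d-1}$ where $\rho$ is identically one. Everything else is the smoothness of $1/\omega$ guaranteed by Lemma \ref{Lemma:smooth}.
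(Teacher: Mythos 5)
Your proposal is correct and follows essentially the same route as the paper: both use Lemma \ref{lemma:support} to localize $\hat{u}$ to $\mathbb{S}^{d-1}$, insert the cutoff $\rho$, factor $(|\xi|^2-1)\rho\varphi=(f(|\xi|^2)-f(1))\tfrac{\rho}{\omega(|\xi|^2)}\varphi$ via Lemma \ref{Lemma:smooth}, and apply the hypothesis through Remark \ref{rem:equi} after checking the resulting test function lies in $\mathcal{F}(\mathcal{Z}(\R^d))$. Your explicit verification that $\psi=\mathcal{F}^{-1}(\tilde{\varphi})\in\mathcal{Z}(\R^d)$ because $\tilde{\varphi}$ vanishes near the origin is exactly the point the paper relies on as well.
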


\begin{proof}
We identify $\hat{u}\in \mathcal{Z}'(\R^d)$ by its unique extension in $\mathcal{S}'(\R^d),$  note this is possible by the assumption $0\notin \supp(\hat{u}).$
By definition, we wish to show that 
    \begin{displaymath}
        \left\langle(-\Delta)u-u,\varphi\right\rangle_{\mathcal{S}'(\R^d),\mathcal{S}(\R^d)}=\langle \hat{u},(|.|^2-1)\hat{\varphi}\rangle_{\mathcal{S}'(\R^d),\mathcal{S}(\R^d)}=0, \quad \text{for all $\varphi \in \mathcal{S}(\R^d)$}. 
    \end{displaymath}

It is clear that $\frac{1}{\omega}(|.|^2)\rho$ has a natural extension on $\mathbb{R}^d$ by defining $\frac{1}{\omega}(|.|^2)\rho=0$ for $|x|\leq 1-\frac{\epsilon}{2}$ and $|x|\geq 1+\frac{\epsilon}{2}$. From this construction we can see that $\frac{1}{\omega}(|.|^2)\rho$ is a smooth function with compact support and $\mathcal{F}^{-1}(\frac{1}{\omega}(|.|^2)\rho)\in \mathcal{Z}(\R^d)$. 
    
With this in mind we have that 
    \begin{displaymath}
    \begin{split}
        \langle \hat{u},(|.|^2-1)\varphi\rangle_{\mathcal{S}'(\R^d),\mathcal{S}(\R^d)}&=\langle \hat{u},(|.|^2-1)\varphi \rho\rangle_{\mathcal{S}'(\R^d),\mathcal{S}(\R^d)}\\
        &+\langle \hat{u},(|.|^2-1)\varphi (1-\rho)\rangle_{\mathcal{S}'(\R^d),\mathcal{S}(\R^d)}.
    \end{split}
    \end{displaymath}
    Hence, 
    \begin{displaymath}
        \langle \hat{u},(|.|^2-1)\varphi\rangle_{\mathcal{S}'(\R^d),\mathcal{S}(\R^d)}=\langle \hat{u},(|.|^2-1)\varphi \rho\rangle_{\mathcal{S}'(\R^d),\mathcal{S}(\R^d)},
    \end{displaymath}
    since $\supp(\hat{u})\subset \mathbb{S}^{d-1}$, by Lemma \ref{lemma:support}. 
 Moreover, we have that 
    \begin{displaymath}
        (|.|^2-1)\varphi \rho=(f(|.|^2)-f(1))(\frac{1}{\omega}(|.|^2)\rho)\varphi. 
    \end{displaymath}
    Since
    \begin{displaymath}
        \langle \hat{u},(f(|.|^2)-f(1))\hat{\psi}\rangle_{\mathcal{S}'(\R^d),\mathcal{S}(\R^d)}=0, \quad \text{for all $\psi \in \mathcal{Z}(\R^d)$,}
    \end{displaymath}
    we have that
    \begin{displaymath}
        \langle \hat{u},(|.|^2-1)\varphi\rangle_{\mathcal{S}'(\R^d),\mathcal{S}(\R^d)}=\langle \hat{u},(f(|.|^2)-f(1))(\tfrac{1}{\omega}(|.|^2)\rho)\varphi\rangle_{\mathcal{S}'(\R^d),\mathcal{S}(\R^d)}=0,    
    \end{displaymath}
    from Remark \ref{rem:equi}. 
\end{proof}

We now consider the other direction. 

\begin{lemma}\label{Lemma:proof_1_2}
     Suppose that $u$ solves $(-\Delta)u=u$ in $\mathcal{S}'(\R^d)$. Then $u$ solves $f(-\Delta)u=f(1)u$ in the sense of Definition \ref{Def:Helmholtz}.
\end{lemma}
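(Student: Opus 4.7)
The plan is to run the argument of Lemma \ref{Lemma:proof_1_1} in reverse, exploiting the factorization $f(|.|^2)-f(1)=\omega(|.|^2)(|.|^2-1)$ supplied by Lemma \ref{Lemma:function}, where $\omega$ is smooth on $(0,\infty)$ by Lemma \ref{Lemma:smooth}.

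First I would verify the admissibility conditions of Definition \ref{Def:Helmholtz}. Passing to the Fourier side, $(-\Delta)u=u$ reads $(|.|^2-1)\hat{u}=0$ in $\mathcal{S}'(\R^d)$, which forces $\supp(\hat{u})\subset \mathbb{S}^{d-1}$; in particular $0\notin\supp(\hat{u})$, so $u$ restricts to a well-defined element of $\mathcal{Z}'(\R^d)$ and is a legitimate candidate solution.

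Next, by Remark \ref{rem:equi} it suffices to show
\begin{displaymath}
\langle \hat{u}, (f(|.|^2)-f(1))\hat{\varphi}\rangle_{\mathcal{S}'(\R^d),\mathcal{S}(\R^d)}=0
\end{displaymath}
for every $\varphi\in\mathcal{Z}(\R^d)$. I would split the test function via $1=\rho+(1-\rho)$. The $(1-\rho)$ contribution vanishes, because $\supp(\hat{u})\subset \mathbb{S}^{d-1}$ lies in the interior of $\{\rho=1\}$, so $(1-\rho)(f(|.|^2)-f(1))\hat{\varphi}$ vanishes on a neighborhood of $\supp(\hat{u})$. For the $\rho$ contribution, I would factor
\begin{displaymath}
\rho(f(|.|^2)-f(1))\hat{\varphi}=(|.|^2-1)\,\psi, \qquad \psi:=\rho\,\omega(|.|^2)\,\hat{\varphi}.
\end{displaymath}
Since $\rho$ has compact support contained in $\R^d\setminus\{0\}$ and $\omega$ is smooth on $(0,\infty)$, the function $\psi$ is compactly supported and smooth, hence lies in $\mathcal{S}(\R^d)$. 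Transferring the polynomial factor $|.|^2-1$ onto $\hat{u}$ and invoking $(|.|^2-1)\hat{u}=0$ in $\mathcal{S}'(\R^d)$ yields zero.

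The main technical care goes into checking that each piece of the decomposition is actually a valid Schwartz test function for $\hat{u}$: the full expression $(f(|.|^2)-f(1))\hat{\varphi}$ is Schwartz only because $\varphi\in\mathcal{Z}(\R^d)$ (so $\hat{\varphi}$ vanishes to infinite order at $0$, absorbing any singular behaviour of $f(|.|^2)$ at the origin coming from $\mu$), while the localized factor $\psi$ sidesteps the origin entirely thanks to $\rho$, and only the smoothness of $\omega$ on $(0,\infty)$ from Lemma \ref{Lemma:smooth} is needed—its reciprocal does not enter this direction. Once this bookkeeping is in place, the proof is essentially the mirror image of the computation in Lemma \ref{Lemma:proof_1_1}, with the roles of $|.|^2-1$ and $f(|.|^2)-f(1)$ swapped.
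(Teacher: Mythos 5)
Your argument is correct and matches the paper's proof essentially step for step: both reduce via Remark \ref{rem:equi}, split the test function with $\rho$ and kill the $(1-\rho)$ piece using $\supp(\hat{u})\subset\mathbb{S}^{d-1}$, then factor $\rho(f(|.|^2)-f(1))\hat{\varphi}=(|.|^2-1)\,\omega(|.|^2)\rho\hat{\varphi}$ and use $(|.|^2-1)\hat{u}=0$ with the smoothness of $\omega$ from Lemma \ref{Lemma:smooth}. Your explicit check that $0\notin\supp(\hat{u})$ and that only $\omega$ (not $1/\omega$) is needed is exactly the bookkeeping the paper relies on as well.
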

\begin{proof}
$0\notin \supp(\hat{u})$ by Lemma \ref{lemma:support}. Hence, from Remark \ref{rem:equi}, we wish to show that 
    \begin{displaymath}
       \langle \hat{u},(f(|.|^2)-f(1))\hat{\varphi}\rangle_{\mathcal{S}'(\R^d),\mathcal{S}(\R^d)}=0, \quad \text{for all $\varphi  \in \mathcal{Z}(\R^d)$}.     
    \end{displaymath}
  Observe that we have 
    \begin{align*}
        &\langle \hat{u},(f(|.|^2)-f(1))\hat{\varphi}\rangle_{\mathcal{S}'(\R^d),\mathcal{S}(\R^d)}\\
        &=\langle \hat{u},(f(|.|^2)-f(1))\rho\hat{\varphi}\rangle_{\mathcal{S}'(\R^d),\mathcal{S}(\R^d)}+\langle \hat{u},(f(|.|^2)-f(1))(1-\rho)\hat{\varphi}\rangle_{\mathcal{S}'(\R^d),\mathcal{S}(\R^d)},
    \end{align*}
        for $\varphi  \in \mathcal{Z}(\R^d)$.  
    Again, since $\supp(\hat{u})\subset \mathbb{S}^{d-1}$ by Lemma \ref{lemma:support}, we have that 
    \begin{displaymath}
        \langle \hat{u},(f(|.|^2)-f(1))(1-\rho)\hat{\varphi}\rangle_{\mathcal{S}'(\R^d),\mathcal{S}(\R^d)}=0, 
    \end{displaymath}
     Moreover, we have that 
    \begin{displaymath}
        (f(|.|^2)-f(1))\hat{\varphi} \rho=(|.|^2-1)(\omega(|.|^2)\rho\hat{\varphi}), \quad \text{for all $\varphi \in \mathcal{Z}(\R^d)$.} 
    \end{displaymath}
    Hence, we obtain
    \begin{align*}
        \langle \hat{u},(f(|.|^2)-f(1))\hat{\varphi}\rangle_{\mathcal{S}'(\R^d),\mathcal{S}(\R^d)}=\langle \hat{u},(|.|^2-1)(\omega(|.|^2)\rho\varphi)\rangle_{\mathcal{S}'(\R^d),\mathcal{S}(\R^d)}=0, 
    \end{align*}
    for all $\varphi \in \mathcal{Z}(\R^d).$
\end{proof}

\begin{proof}[First Proof of Theorem \ref{main_thm}]
    It is clear that 
    \begin{displaymath}
    V_{f(-\Delta)}=V_{(-\Delta)}, 
    \end{displaymath}
    from Lemma \ref{Lemma:proof_1_1} and \ref{Lemma:proof_1_2}. 
    Hence, from Theorem \ref{main_thm_2}, we have proved Theorem \ref{main_thm}. 
\end{proof}

%=========================
%   Second Proof
%=========================

\subsection{Second Proof} \label{SEC:second_proof}

From Lemma \ref{lemma:support} we have that $\supp{\hat{u}}\subset \mathbb{S}^{d-1}$. 

The goal of this proof will be to utilize the classical structure theorems. 

We now show that if $u$ solves the generalized helmholtz problem then $\hat{u}$ is necessarily a single-layer distribution on $\mathbb{S}^{d-1}$. 

We begin with the following lemma.

\begin{lemma}\label{lemma:proof_2_2}
    Suppose that $\hat{u}$ is a single-layer distribution on $\mathbb{S}^{d-1}$. 
    Then $u$ solves $f(-\Delta)u=f(1)u$ in the sense of Definition \ref{Def:Helmholtz}. 
\end{lemma}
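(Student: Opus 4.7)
The plan is as follows. Suppose $\hat{u} = L^{(0)}_{\mathbb{S}^{d-1}}(T)$ for some $T\in\mathcal{D}'(\mathbb{S}^{d-1})$, so that by Definition \ref{DEF:layer_dist},
\begin{displaymath}
\langle \hat{u},\psi\rangle_{\mathcal{D}'(\R^d),\mathcal{D}(\R^d)} = \langle T,\psi|_{\mathbb{S}^{d-1}}\rangle_{\mathcal{D}'(\mathbb{S}^{d-1}),\mathcal{D}(\mathbb{S}^{d-1})}
\end{displaymath}
for all $\psi\in\mathcal{D}(\R^d)$. Because single-layer distributions are supported on $\mathbb{S}^{d-1}$, we immediately have $\supp(\hat{u})\subset \mathbb{S}^{d-1}$, which in particular gives $0\notin\supp(\hat{u})$, verifying the second condition of Definition \ref{Def:Helmholtz}. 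Moreover, $\hat{u}\in\mathcal{E}'(\R^d)\subset \mathcal{S}'(\R^d)$, and so the restriction $u\in\mathcal{Z}'(\R^d)$ is well defined.

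By Remark \ref{rem:equi}, it remains to verify that
\begin{displaymath}
\langle \hat{u},(f(|.|^2)-f(1))\hat{\varphi}\rangle_{\mathcal{S}'(\R^d),\mathcal{S}(\R^d)}=0, \quad \text{for all }\varphi\in\mathcal{Z}(\R^d).
\end{displaymath}
The key observation is that $f(|x|^2)-f(1)$ vanishes identically on $\mathbb{S}^{d-1}$, hence $(f(|.|^2)-f(1))\hat{\varphi}|_{\mathbb{S}^{d-1}}\equiv 0$ for any $\varphi$.

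To make this rigorous, I would introduce the bump function $\rho$ from \eqref{eqn:rho} and write
\begin{displaymath}
(f(|.|^2)-f(1))\hat{\varphi} = \rho(f(|.|^2)-f(1))\hat{\varphi} + (1-\rho)(f(|.|^2)-f(1))\hat{\varphi}.
\end{displaymath}
The second term vanishes in the pairing with $\hat{u}$ because $\supp(\hat{u})\subset\mathbb{S}^{d-1}$ and $1-\rho\equiv 0$ on a neighborhood of $\mathbb{S}^{d-1}$. The first term, $\rho(f(|.|^2)-f(1))\hat{\varphi}$, belongs to $\mathcal{D}(\R^d)$, so the single-layer pairing applies directly:
\begin{displaymath}
\langle \hat{u},\rho(f(|.|^2)-f(1))\hat{\varphi}\rangle_{\mathcal{D}'(\R^d),\mathcal{D}(\R^d)} = \bigl\langle T,\bigl[\rho(f(|.|^2)-f(1))\hat{\varphi}\bigr]\big|_{\mathbb{S}^{d-1}}\bigr\rangle_{\mathcal{D}'(\mathbb{S}^{d-1}),\mathcal{D}(\mathbb{S}^{d-1})} = 0,
\end{displaymath}
since on $\mathbb{S}^{d-1}$ we have $\rho\equiv 1$ and $f(|x|^2)-f(1)=0$. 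Combining both terms yields the desired identity, completing the proof.

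There is no serious obstacle here: the argument is purely algebraic, following from the defining property of a single-layer distribution and the fact that the symbol $f(|\xi|^2)-f(1)$ of the operator vanishes on the characteristic set $\mathbb{S}^{d-1}$. The mildest care needed is simply the localization via $\rho$ to bring a Schwartz-type test function into $\mathcal{D}(\R^d)$, which the compact support of $\hat{u}$ makes harmless.
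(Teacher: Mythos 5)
Your proof is correct and follows essentially the same route as the paper: write $\hat{u}=L^{(0)}_{\mathbb{S}^{d-1}}(T)$, reduce via Remark \ref{rem:equi}, and use that the symbol $f(|\xi|^2)-f(1)$ vanishes identically on $\mathbb{S}^{d-1}$ so the single-layer pairing gives zero. Your explicit localization by $\rho$ plays the same role as the paper's reduction to test functions $\hat{\varphi}\in\mathcal{D}(\R^d\setminus\{0\})$, and is if anything slightly more careful.
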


\begin{proof}
Note if $\hat{u}$ is a single-layer distribution on $\mathbb{S}^{d-1}$ then $0\notin \supp(\hat{u})$. 
    By Remark \ref{rem:equi}, it is enough to show that  
    \begin{displaymath}
        \left \langle \hat{u},(f(|.|^2)-f(1))\hat{\varphi}\right \rangle_{\mathcal{S}'(\mathbb{R}^{d}),\mathcal{S}(\mathbb{R}^{d})}=0, \quad \text{for all $\varphi \in \mathcal{Z}(\mathbb{R}^d)$.}
    \end{displaymath}
Since, $\supp{\hat{u}}\subset \mathbb{S}^{d-1}$ it is enough to assume that $\hat{\varphi}\in \mathcal{D}(\R^d\setminus \{0\})$. 

If $\hat{u}$ is equal to $L^{(0)}_{\mathbb{S}^{d-1}}(T)$, where $T\in \mathcal{D}'(\mathbb{S}^{d-1})$, then we have that 
    \begin{align*}
         &\left \langle \hat{u},(f(|.|^2)-f(1))\hat{\varphi}\right \rangle_{\mathcal{S}'(\mathbb{R}^d),\mathcal{S}(\mathbb{R}^d)}\\
         &= \left \langle \hat{u},(f(|.|^2)-f(1))\hat{\varphi}\right \rangle_{\mathcal{D}'(\mathbb{R}^d),\mathcal{D}(\mathbb{R}^d)}\\
         &=\left \langle L^{(0)}_{\mathbb{S}^{d-1}}(T),(f(|.|^2)-f(1))\hat{\varphi}\right \rangle_{\mathcal{D}'(\mathbb{R}^d),\mathcal{D}(\mathbb{R}^d)}\\
         &=\left \langle T,(f(|.|^2)-f(1))\hat{\varphi}|_{\mathbb{S}^{d-1}}\right \rangle_{\mathcal{D}'(\mathbb{S}^{d-1}),\mathcal{D}(\mathbb{S}^{d-1})}=0,
    \end{align*}
    since $f$ is injective.
\end{proof}

To show that $\hat{u}$ is necessarily a single layer distribution on $\mathbb{S}^{d-1}$ we need the following lemmata which is inspired by \cite[Proof of Theorem 3.11]{MR1025883}. 

\begin{lemma}
    For a given $\tilde{\varphi}\in \mathcal{D}(\mathbb{R}^d)$, such that $\tilde{\varphi}|_{\mathbb{S}^{d-1}}=\varphi\in \mathcal{D}(\mathbb{S}^{d-1})$, then we have that the function $\gamma$, defined as,
    \begin{displaymath}
        \gamma(\xi):=\frac{1}{|\xi|^2-1}(\tilde{\varphi}(\xi)-\varphi(\xi/|\xi|)),
    \end{displaymath}
    for $\xi\in \mathbb{R}^d$ such that $|\xi|\in (1-\epsilon,1)\cup (1,1+\epsilon)$, has a smooth extension on $\{\xi\in \mathbb{R}^d:|\xi|\in(1-\epsilon,1+\epsilon)\}$. 
\end{lemma}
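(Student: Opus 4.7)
The plan is to desingularize $\gamma$ through the sphere $\{|\xi|=1\}$ by passing to polar coordinates and applying Hadamard's lemma (equivalently, Taylor's theorem with integral remainder) to the smooth function $\tilde{\varphi}$.

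First I would write every $\xi$ in the annulus $\{|\xi|\in(1-\epsilon,1+\epsilon)\}$ in polar form $\xi=r\theta$, where $r=|\xi|$ and $\theta=\xi/|\xi|$. Since the annulus is bounded away from the origin, the maps $\xi\mapsto r$ and $\xi\mapsto\theta$ are smooth, so smoothness of $\gamma$ as a function of $\xi$ is equivalent to joint smoothness in $(r,\theta)$. Next I would factor the denominator as
\begin{displaymath}
|\xi|^2-1 = (r-1)(r+1),
\end{displaymath}
noting that $r+1$ is smooth and bounded below by a positive constant on the annulus, so the only potential obstruction to smoothness is the factor $r-1$.

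For the numerator, because $\theta\in\mathbb{S}^{d-1}$ one has $\varphi(\theta)=\tilde\varphi(\theta)$, hence by Hadamard's lemma
\begin{displaymath}
\tilde\varphi(r\theta) - \varphi(\theta) = \tilde\varphi(r\theta) - \tilde\varphi(\theta) = (r-1)\int_0^1 \theta\cdot\nabla\tilde\varphi\bigl((1+t(r-1))\theta\bigr)\,\dt.
\end{displaymath}
Substituting this into the definition of $\gamma$ cancels the offending factor $r-1$ and yields
\begin{displaymath}
\gamma(\xi) = \frac{1}{r+1}\int_0^1 \theta\cdot\nabla\tilde\varphi\bigl((1+t(r-1))\theta\bigr)\,\dt
\end{displaymath}
for $r\in(1-\epsilon,1)\cup(1,1+\epsilon)$. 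Since the right-hand side is a smooth function of $(r,\theta)$ on all of $(1-\epsilon,1+\epsilon)\times\mathbb{S}^{d-1}$, and the argument of $\tilde\varphi$ stays inside the support chart where it is smooth, this formula defines the desired smooth extension of $\gamma$ to the full annulus.

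There is no serious obstacle: the only care needed is in verifying that the Hadamard integral representation is legitimate on a domain that is star-shaped (in the radial variable) with respect to $r=1$, which is automatic here since we work on the radial segment from $\theta$ to $r\theta$ and the annulus is a neighbourhood of $\mathbb{S}^{d-1}$. The argument is purely local in the radial direction, so no global assumption on $\tilde\varphi$ beyond smoothness is required.
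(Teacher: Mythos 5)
Your proof is correct and follows essentially the same route as the paper: both extract a factor of $|\xi|-1$ from the numerator via Taylor's theorem with integral remainder and cancel it against $|\xi|^2-1=(|\xi|-1)(|\xi|+1)$. The only cosmetic difference is that you use a one-dimensional radial Hadamard formula involving only $\nabla\tilde{\varphi}$, while the paper expands $\tilde{\varphi}$ around $\xi/|\xi|$ with a second-order (Hessian) integral remainder; both yield a manifestly smooth formula for $\gamma$ on the full annulus.
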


\begin{proof}
    Observe that by the Taylor theorem 
    \begin{align*}
        &\tilde{\varphi}(\xi)-\varphi(\xi/|\xi|)\\
        &=(\xi-\xi/|\xi|)\cdot \nabla\tilde{\varphi}(\xi/|\xi|)\\
        &+\int_{0}^{1}(1-t)(\xi-\xi/|\xi|)^T\nabla^2\tilde{\varphi}(\xi/|\xi|+t\xi)(\xi-\xi/|\xi|)\, \dt,\\
        &=(|\xi|-1)(\xi/|\xi|)\cdot \nabla\tilde{\varphi}(\xi/|\xi|)\\
        &+(|\xi|-1)\int_{0}^{1}(1-t)(\xi/|\xi|)^T\nabla^2\tilde{\varphi}(\xi/|\xi|+t\xi)(\xi-\xi/|\xi|)\, \dt,
    \end{align*}
    for $\xi\in \mathbb{R}^d$ such that $|\xi|\in (1-\epsilon,1)\cup (1,1+\epsilon)$
    where $\epsilon \in (0,1)$. Moreover, we have 
        \begin{align*}
        &\frac{1}{|\xi|^2-1}(\tilde{\varphi}(\xi)-\varphi(\xi/|\xi|))\\
        &=\frac{1}{1+|\xi|}(\xi/|\xi|)\cdot \nabla\tilde{\varphi}(\xi/|\xi|)\\
        &+\frac{1}{1+|\xi|}\int_{0}^{1}(1-t)(\xi/|\xi|)^T\nabla^2\tilde{\varphi}(\xi/|\xi|+t\xi)((\xi-\xi/|\xi|))\, \dt. 
    \end{align*}
    Hence, $\gamma$ has a smooth extension to $\{\xi\in \mathbb{R}^d:|\xi|\in(1-\epsilon,1+\epsilon)\}$. 
\end{proof}

\begin{lemma}\label{Lemma:proof_2}
 We have that  
    \begin{align*}
        \left \langle \hat{u},\tilde{\varphi}\right \rangle_{\mathcal{D}(\mathbb{R}^d),\mathcal{D}'(\mathbb{R}^d)}
        &=\left \langle \hat{u},E\varphi\right \rangle_{\mathcal{D}(\mathbb{R}^d),\mathcal{D}'(\mathbb{R}^d)},
    \end{align*}
    for $\tilde{\varphi}\in \mathcal{D}(\mathbb{R}^d)$ such that $\tilde{\varphi}|_{\mathbb{S}^{d-1}}=\varphi$. 
    
\end{lemma}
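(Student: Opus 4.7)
The plan is to show $\langle \hat u,\, \tilde\varphi - E\varphi\rangle_{\mathcal{D}',\mathcal{D}} = 0$ by localising near $\mathbb{S}^{d-1}$ and then ``dividing by $f(|\cdot|^2) - f(1)$'' to import the Helmholtz equation. First I would pick a smooth radial cutoff $\chi$ with $\supp\chi \subset \{1-\delta < |\xi| < 1+\delta\}$ for some $\delta \in (0, \epsilon/4)$ chosen so that $|\xi|^2 \in (1-\epsilon, 1+\epsilon)$ throughout $\supp\chi$, and with $\chi \equiv 1$ in a neighbourhood of $\mathbb{S}^{d-1}$. Splitting
$$\tilde\varphi - E\varphi \;=\; \chi(\tilde\varphi - E\varphi) \;+\; (1-\chi)(\tilde\varphi - E\varphi),$$
the second summand vanishes in a neighbourhood of $\mathbb{S}^{d-1}$, so by Lemma \ref{lemma:support} its pairing with $\hat u$ is zero.

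For the localised first summand, on $\supp\chi$ we have $\rho \equiv 1$ (by the size of $\delta$ and the definition \eqref{eqn:rho} of $\rho$), and hence $E\varphi(\xi) = \varphi(\xi/|\xi|)$ there. The preceding lemma then gives
$$\chi(\xi)\bigl(\tilde\varphi(\xi) - \varphi(\xi/|\xi|)\bigr) \;=\; \chi(\xi)(|\xi|^2 - 1)\gamma(\xi)$$
with $\gamma$ smooth on $\{|\xi| \in (1-\epsilon, 1+\epsilon)\}$. Combining with Lemma \ref{Lemma:smooth}, which provides a smooth reciprocal $1/\omega$ on $(1-\epsilon, 1+\epsilon)$, we factor
$$\chi(\tilde\varphi - E\varphi) \;=\; (f(|\cdot|^2) - f(1))\cdot h, \qquad\text{where } h := \frac{\chi\,\gamma}{\omega(|\cdot|^2)},$$
extended by zero outside $\supp\chi$.

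The final step is to feed $h$ into the Helmholtz equation. By construction $h$ is smooth with compact support contained in a compact set away from $0$, so in particular $\partial^\alpha h(0) = 0$ for every multi-index $\alpha$. Writing $h = \hat\psi$ with $\psi = \mathcal{F}^{-1}(h) \in \mathcal{Z}(\R^d)$, Remark \ref{rem:equi} applied to this test function gives
$$\left\langle \hat u,\, \chi(\tilde\varphi - E\varphi)\right\rangle_{\mathcal{S}',\mathcal{S}} \;=\; \left\langle \hat u,\, (f(|\cdot|^2) - f(1))\hat\psi\right\rangle_{\mathcal{S}',\mathcal{S}} \;=\; 0.$$
Adding the two pieces yields the claim.

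The one real technical point is ensuring that the factorisation in the middle step genuinely produces a smooth function across the sphere where the naive quotient has a vanishing denominator; this is precisely what the preceding lemma (smoothness of $\gamma$) and Lemma \ref{Lemma:smooth} (smoothness of $1/\omega$ near $1$) are designed to supply, so once the cutoff $\chi$ is chosen with $\delta$ small enough the argument is a direct manipulation.
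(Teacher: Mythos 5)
Your proof is correct and follows essentially the same route as the paper: localise near the sphere using Lemma \ref{lemma:support}, rewrite the difference via the Taylor lemma as $(|\cdot|^2-1)\gamma=(f(|\cdot|^2)-f(1))\tfrac{1}{\omega}(|\cdot|^2)\gamma$, and annihilate the resulting term through Remark \ref{rem:equi} after checking the factor is the Fourier transform of a $\mathcal{Z}(\R^d)$ function. The only difference is cosmetic: the paper multiplies directly by $\rho$ rather than introducing a second cutoff $\chi$, and your tighter choice of $\delta$ (ensuring $|\xi|^2\in(1-\epsilon,1+\epsilon)$ on $\supp\chi$) actually treats the domain of $\tfrac{1}{\omega}(|\cdot|^2)$ a bit more carefully than the paper's argument does.
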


\begin{proof}
Note that
    \begin{align*}
        \rho\tilde{\varphi}&= \rho\varphi(./|.|)+ \rho(\tilde{\varphi}-\varphi(./|.|)),\\
        &=\rho\varphi(./|.|)+ (|.|^2-1)\rho\gamma,\\
        &=\rho\varphi(./|.|)+ (f(|.|^2)-f(1))\frac{1}{\omega}(|.|^2)\rho\gamma.
    \end{align*}
    Then it is clear that 
    \begin{align*}
        \left \langle \hat{u},\tilde{\varphi}\right \rangle_{\mathcal{D}(\mathbb{R}^d),\mathcal{D}'(\mathbb{R}^d)}&=\left \langle \hat{u},\rho\varphi(./|.|)\right \rangle_{\mathcal{D}(\mathbb{R}^d),\mathcal{D}'(\mathbb{R}^d)},\\
        &=\left \langle \hat{u},E\varphi\right \rangle_{\mathcal{D}(\mathbb{R}^d),\mathcal{D}'(\mathbb{R}^d)}. 
    \end{align*}
\end{proof}
\begin{proof}[Second Proof of Theorem \ref{main_thm}]
Define $\tilde{T}:\mathcal{D}(\mathbb{S}^{d-1})\rightarrow \C$ as 
\begin{displaymath}
    \langle \tilde{T},\varphi\rangle_{\mathcal{D}'(\mathbb{S}^{d-1}),\mathcal{D}(\mathbb{S}^{d-1})}:=\langle \hat{u},E\varphi\rangle_{\mathcal{D}'(\mathbb{R}^d),\mathcal{D}(\mathbb{R}^d)},
\end{displaymath}
for $\varphi \in \mathcal{D}(\mathbb{S}^{d-1})$, where $u$ solves $f(-\Delta)u=f(1)u$, in the sense of Definition \ref{Def:Helmholtz}. 
    Note that $\tilde{T}$ is well-defined on $\mathcal{D}(\mathbb{S}^{d-1})$, and is continuous on $\mathcal{D}(\mathcal{S}^{d-1})$ which implies that $\tilde{T}\in \mathcal{D}'(\mathbb{S}^{d-1})$. Moroever, by Lemma \ref{Lemma:proof_2},
    we have that for all $\tilde{\varphi}\in \mathcal{D}(\mathbb{R}^d)$ such that $\tilde{\varphi}|_{\mathbb{S}^{d-1}}=\varphi$ that 
    \begin{displaymath}
            \langle \hat{u},\tilde{\varphi}\rangle_{\mathcal{D}'(\mathbb{R}^d),\mathcal{D}(\mathbb{R}^d)}=\langle \hat{u},E\varphi\rangle_{\mathcal{D}'(\mathbb{R}^d),\mathbb{R}^d)}=\langle \tilde{T},\varphi\rangle_{\mathcal{D}'(\mathbb{S}^{d-1}),\mathcal{D}(\mathbb{S}^{d-1})}.
    \end{displaymath}
    
  Then $\hat{u}$ must necessarily be a single-layer distribution on $\mathbb{S}^{d-1}$.

    We know from  
    %\st{\cite[Theorem~7.1.14, Chap.~VII]{MR1996773}} 
    \cite[Proposition 8.4.1]{MR2961861} that 
    \begin{displaymath}
        u(x)=(2\pi)^{-d}\left \langle L_{\mathbb{S}^{d-1}}^{(0)}(\tilde{T}), e_{x}\right \rangle_{\mathcal{E}'(\R^d),\mathcal{E}(\R^d)}, \quad x\in \mathbb{R}^d.
    \end{displaymath} 
    Due to the support of $L_{\mathbb{S}^{d-1}}^{(0)}(\tilde{T})$ we have that 
    \begin{displaymath}
    \begin{split}
        u(x)&=(2\pi)^{-d}\left \langle L_{\mathbb{S}^{d-1}}^{(0)}(\tilde{T}), e_{x}\right \rangle_{\mathcal{E}'(\R^d),\mathcal{E}(\R^d)}\\
        &=(2\pi)^{-d}\left \langle L_{\mathbb{S}^{d-1}}^{(0)}(\tilde{T}), \rho e_{x}\right \rangle_{\mathcal{D}'(\R^d),\mathcal{D}(\R^d)}\\
        &=\left \langle T, e_{x}\right \rangle_{\mathcal{D}'(\mathbb{S}^{d-1}),\mathcal{D}(\mathbb{S}^{d-1})},
    \end{split}
    \end{displaymath}
    where $T:=(2\pi)^{-d}\tilde{T}.$ This combined with Lemma \ref{lemma:proof_2_2} we obtain our result. 
\end{proof}

%=========================
%   Banach
%=========================

\section{The eigenvalue problem on Banach spaces}
\label{SEC:BANACH}
Certain aspects of Section \ref{sec:first_proof} can be generalized. In particular, Lemma \ref{Lemma:proof_1_2}. Here we present a simple proof. 

To begin, let $(A,D(A))$ be a generator of a $C_0$-semigroup $(e^{-tA})_{t\geq 0}$ on a Banach space $X$.

For $u\in D(A)$ we define 
\begin{displaymath}
    f(A)u:=au+b(-Au)+\int_{(0,\infty)}(u-e^{-tA}u)\,d\mu(t).
\end{displaymath}
For a full characterization of $(f(A),D(f(A)))$ and its associated $C_0$-semigroup, see the celebrated Phillips Subordination Theorem \cite[Theorem 13.6]{MR2978140}. 

We are now ready to prove a generalization of Lemma \ref{Lemma:proof_1_2}. 

\begin{lemma}
    Suppose that $(-A)u=u$. Let $f$ be a Bernstein function with L\'evy triple $(a,b,\mu)$. Then we have that $f(A)u=f(1)u$.
\end{lemma}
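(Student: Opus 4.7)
The plan is to reduce the abstract operator identity to a scalar eigenvalue computation: the hypothesis on $u$ should propagate to every $e^{-tA}$, after which the three summands in the definition of $f(A)u$ all collapse to scalar multiples of $u$ whose coefficients reassemble $f(1)$ via the L\'evy--Khintchine representation of Theorem~1.2.

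First I would upgrade the eigenvalue identity from $A$ to the entire semigroup, that is, show that $e^{-tA}u = e^{-t}u$ for every $t\geq 0$. Since the eigenvalue assumption places $u$ in $D(A)$, the orbit $w(t):=e^{-tA}u$ is strongly $C^{1}$ in $t$ with
\[
    w'(t) \;=\; -A e^{-tA} u \;=\; -e^{-tA} A u,
\]
the second equality using that $A$ commutes with its own semigroup on $D(A)$. Substituting the eigenvalue relation on the right turns this into a \emph{scalar-coefficient} Cauchy problem in $X$, whose unique solution is $w(t) = e^{-t} u$.

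Next I would substitute $e^{-tA}u = e^{-t}u$ and the eigenvalue identity into the defining formula for $f(A)u$. The drift terms immediately produce a scalar multiple of $u$ corresponding to $a+b$, and in the integral the integrand becomes $(1-e^{-t})u$. This is Bochner-integrable against $\mu$ because $|1-e^{-t}| \leq 1\wedge t$ and $\int_{(0,\infty)}(1\wedge t)\,\mu(dt)<\infty$ by the L\'evy-measure hypothesis, so one may pull $u$ out of the integral. Assembling the pieces gives
\[
  f(A)u \;=\; \Bigl(a \,+\, b \,+\, \int_{0}^{\infty}(1-e^{-t})\,\mu(dt)\Bigr)\,u \;=\; f(1)\,u,
\]
by the L\'evy--Khintchine representation of Theorem~1.2 evaluated at $\lambda = 1$.

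The only genuine technical points are the commutation $A e^{-tA}u = e^{-tA} A u$ on $D(A)$ and the Bochner-integrability check against $\mu$, both of which are textbook semigroup facts. No decomposition of $f$, approximation of $\mu$, or delicate estimate is needed: the lemma is really just the Banach-space shadow of the scalar eigenvalue calculation, and plays the role in this abstract setting that Lemma~\ref{Lemma:proof_1_2} plays for $f(-\Delta)$.
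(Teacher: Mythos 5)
Your overall route is the paper's: upgrade the eigenvalue identity to the semigroup, substitute into the Phillips-type formula, and recover $f(1)$ from the L\'evy--Khintchine representation at $\lambda=1$. The only difference is that the paper cites \cite[Proposition 3.1.9 (j)]{MR2798103} for the semigroup identity where you prove it by an ODE/uniqueness argument, and your integrability check via $|1-e^{-t}|\leq 1\wedge t$ makes explicit a step the paper leaves implicit.

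There is, however, a sign inconsistency in your derivation of $e^{-tA}u=e^{-t}u$, which is the one step that would fail as written. You set $w(t)=e^{-tA}u$ and write $w'(t)=-Ae^{-tA}u=-e^{-tA}Au$; substituting the hypothesis $(-A)u=u$, i.e.\ $Au=-u$, this gives $w'(t)=+e^{-tA}u=w(t)$, so your scalar Cauchy problem is $w'=w$, $w(0)=u$, whose unique solution is $w(t)=e^{t}u$, not $e^{-t}u$. The issue is which operator generates the semigroup: for the lemma to come out as stated (and for the drift term $b(-Au)=bu$ in the definition of $f(A)u$ to be the right one), the convention must be that $A$ itself generates the semigroup denoted $(e^{-tA})_{t\geq 0}$ -- admittedly the paper's notation invites confusion here -- in which case $w'(t)=Ae^{-tA}u=e^{-tA}Au=-w(t)$ and $w(t)=e^{-t}u$ follows. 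Under the convention implicit in your derivative formula (generator $-A$), the hypothesis $(-A)u=u$ would instead force $e^{-tA}u=e^{t}u$, the integrand $(1-e^{t})u$ need not even be $\mu$-integrable, and the conclusion is lost. Once the sign in $w'(t)$ is corrected (or the generator convention stated explicitly), the rest of your argument -- pulling $u$ out of the Bochner integral and evaluating the L\'evy--Khintchine formula at $\lambda=1$ -- is correct and coincides with the paper's proof.
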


\begin{proof}
    From \cite[Proposition 3.1.9 (j)]{MR2798103} we have that 
    \begin{displaymath}
        e^{-tA}u=e^{-t}u, \quad t\geq 0. 
    \end{displaymath}
    Since, $u\in D(A)$, we have that 
    \begin{displaymath}
    \begin{split}
        f(A)u&=au+b(-Au)+\int_{0}^\infty (u-e^{-tA}u)\,d\mu(t)\\
        &=au+bu+\int_{0}^\infty (u-e^{-t}u)\,d\mu(t)=f(1)u. 
    \end{split}
    \end{displaymath}
    
\end{proof}

The authors are not aware of a generalization of Lemma \ref{Lemma:proof_1_1}. Although, we suspect that such a result could be possible under some assumptions on $A$. 

%=========================
%   Appendix
%=========================

\appendix

\section{The need for the space $\mathcal{Z}'(\R^d)$}
\label{SUBSEC:NEED_Z}
We now demonstrate why we are so particular about using $\mathcal{Z}'(\R^d)$ as opposed to tempered distributions $\mathcal{S}'(\R^d)$. The reason is that our operators $f(-\Delta)$ are not well-defined on $\mathcal{S}'(\R^d)$ in general. 

To explain this, we focus on the case on the fractional laplacian, $f(-\Delta)=(-\Delta)^\sigma$ where $\sigma \in (0,1).$ We also direct the reader to \cite[Section 2]{MR2270163} and \cite[Section 2]{MR4472712} for similar discussions. 

One potential definition of $(-\Delta)^\sigma$ on $\mathcal{S}'(\R^d)$ could be given by 
\begin{displaymath}
    \langle(-\Delta)^\sigma u, \varphi \rangle_{\mathcal{S}'(\R^d),\mathcal{S}(\R^d)}:=\langle u, \mathcal{F}^{-1}(|.|^{2\sigma}\hat{\varphi}) \rangle_{\mathcal{S}'(\R^d),\mathcal{S}(\R^d)}, 
\end{displaymath}
for all $(u,\varphi)\in \mathcal{S}'(\R^d)\times \mathcal{S}(\R^d)$.
The reader might believe that one can leverage the Parseval relation and duality but the problem here is that duality fails.  

In general $|.|^{2\sigma}\hat{\varphi} \notin \mathcal{S}(\R^d)$ if $\varphi \in \mathcal{S}(\R^d)$ and hence one cannot deduce that $\mathcal{F}^{-1}(|.|^{2\sigma}\hat{\varphi}) \in \mathcal{S}(\R^d)$. It now becomes problematic in giving meaning to the term 
\begin{displaymath}
\langle u, \mathcal{F}^{-1}(|.|^{2\sigma}\hat{\varphi}) \rangle_{\mathcal{S}'(\R^d),\mathcal{S}(\R^d)}, 
\end{displaymath}
since $u$ is a linear functional on $\mathcal{S}(\R^d)$. 

Another possibility is that we could define $\mathcal{F}((-\Delta)^\sigma u):=|.|^{2\sigma} \hat{u}$. However, the issue with this definition is that, \emph{\'a priori}, it is very possible that we are violating a rather important observation by Laurent Schwartz in that \emph{one cannot multiply distributions in general even if one of the distributions is a continuous function} \cite{MR64324}. 

To demonstrate this, we present a multiplication theorem which has recently gained exposure due to the celebrated \emph{theory of Regularity Structures} of Martin Hairer \cite[Proposition 4.14]{MR3274562}, see also \cite[Chapter 2, Section 8]{MR2768550}. Before we state the theorem, we denote $\mathcal{C}^{\alpha}_{loc}(\R^d)$ to be the space of (locally) $\alpha-$H\"older distributions \cite[Definition 3.7]{MR3274562}. 

\begin{theorem}\ \\
    Let $\alpha>0$ and $\beta<0$ such that $\alpha+\beta>0$. Then there exists a unique continuous map 
    \begin{displaymath}
        \mathcal{M}:\mathcal{C}^{\alpha}_{loc}(\R^d)\times \mathcal{C}^{\beta}_{loc}(\R^d)\rightarrow  \mathcal{C}^{\beta}_{loc}(\R^d),
    \end{displaymath}
    which extends the usual product $(f,g)\mapsto fg$ when $f$ is smooth.
\end{theorem}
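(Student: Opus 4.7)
The plan is to construct $\mathcal{M}$ explicitly using Bony's paraproduct decomposition, with the Littlewood-Paley calculus doing all of the heavy lifting. Fix a dyadic partition of unity $(\chi, \varphi)$ generating the frequency projectors $\Delta_{-1} = \chi(D)$ and $\Delta_j = \varphi(2^{-j} D)$ for $j \geq 0$, together with the low-frequency truncations $S_j = \sum_{k \leq j-1} \Delta_k$. In this language, $v \in \mathcal{C}^s_{\loc}(\R^d)$ is equivalent (for $s \notin \N_0$) to the bound $\sup_{j} 2^{js}\norm{\Delta_j v}_{L^\infty(K)} < \infty$ for every compact $K$. For smooth $f, g$ one has Bony's identity
\begin{equation*}
  fg \;=\; T_f g \;+\; T_g f \;+\; R(f,g),
\end{equation*}
where $T_f g := \sum_{j\geq 1} S_{j-1} f \cdot \Delta_j g$ is the paraproduct and $R(f,g) := \sum_{|i-j|\leq 1}\Delta_i f \cdot \Delta_j g$ is the resonant term. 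I would construct $\mathcal{M}(f,g)$ as the right-hand side of this identity, after verifying that each of the three series converges in $\mathcal{D}'(\R^d)$ and enjoys a bilinear continuity estimate.

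Each piece is handled by a separate Littlewood-Paley estimate. For $T_f g$ only the local boundedness of $f$ is used; since $\alpha > 0$ forces the H\"older embedding $\mathcal{C}^\alpha_{\loc} \hookrightarrow L^\infty_{\loc}$, one obtains $T_f g \in \mathcal{C}^\beta_{\loc}$ with $\norm{T_f g}_{\mathcal{C}^\beta(K)} \lesssim \norm{f}_{L^\infty} \norm{g}_{\mathcal{C}^\beta}$. For the symmetric paraproduct $T_g f$ one uses $\norm{S_{j-1} g}_{L^\infty} \lesssim 2^{-j\beta}\norm{g}_{\mathcal{C}^\beta}$ (here $\beta < 0$ is exactly what allows the truncation to blow up at the right rate) together with $\norm{\Delta_j f}_{L^\infty} \lesssim 2^{-j\alpha}\norm{f}_{\mathcal{C}^\alpha}$, yielding $T_g f \in \mathcal{C}^{\alpha+\beta}_{\loc} \hookrightarrow \mathcal{C}^\beta_{\loc}$. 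The crucial piece is $R(f,g)$: the product $\Delta_i f \cdot \Delta_j g$ with $|i-j| \leq 1$ has frequency support in a ball of radius $\sim 2^j$ (not an annulus), so when one applies $\Delta_k$ to $R(f,g)$ only indices $j \gtrsim k$ contribute, and the estimate
\begin{equation*}
  \sum_{j\gtrsim k}\norm{\Delta_i f \,\Delta_j g}_{L^\infty} \;\lesssim\; \sum_{j\gtrsim k} 2^{-j(\alpha+\beta)}\norm{f}_{\mathcal{C}^\alpha}\norm{g}_{\mathcal{C}^\beta}
\end{equation*}
is a geometric sum controlled by $2^{-k(\alpha+\beta)}$ \emph{precisely} when $\alpha+\beta > 0$. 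This is the single place in the proof where the hypothesis $\alpha+\beta > 0$ is used.

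Uniqueness is the main obstacle, since $C^\infty$ is not dense in $\mathcal{C}^\alpha_{\loc}$ in the norm topology, so a naive density argument fails. I would circumvent this as follows. The paraproduct estimates above are stable under a small shift of exponents: they give a continuous map $\mathcal{C}^{\alpha'}_{\loc} \times \mathcal{C}^{\beta'}_{\loc} \to \mathcal{C}^{\beta'}_{\loc}$ for any $\alpha' \in (0,\alpha]$ and $\beta' \in [\beta, 0)$ with $\alpha' + \beta' > 0$. Pick such $\alpha' < \alpha$ and $\beta' > \beta$; mollifying $f \in \mathcal{C}^\alpha_{\loc}$ and $g \in \mathcal{C}^\beta_{\loc}$ yields smooth $f_n \to f$ in $\mathcal{C}^{\alpha'}_{\loc}$ and $g_n \to g$ in $\mathcal{C}^{\beta'}_{\loc}$. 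Any two continuous extensions $\mathcal{M}, \mathcal{M}'$ agree on the smooth pair $(f_n, g_n)$, where both equal the pointwise product $f_n g_n$; passing to the limit via the embeddings $\mathcal{C}^\alpha_{\loc} \hookrightarrow \mathcal{C}^{\alpha'}_{\loc}$, $\mathcal{C}^\beta_{\loc} \hookrightarrow \mathcal{C}^{\beta'}_{\loc}$ and the continuity of $\mathcal{M}, \mathcal{M}'$ in the coarser topology forces $\mathcal{M}(f,g) = \mathcal{M}'(f,g)$ in $\mathcal{D}'(\R^d)$, which is enough to conclude equality in $\mathcal{C}^\beta_{\loc}$. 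The delicate point, and where I would spend the most care, is justifying that continuity of $\mathcal{M}$ on $\mathcal{C}^\alpha_{\loc} \times \mathcal{C}^\beta_{\loc}$ actually transfers to continuity on the coarser product space; this amounts to observing that the bilinear bounds produced by the paraproduct construction are uniform in a whole scale of Besov spaces.
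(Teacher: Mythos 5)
The paper offers no proof of this statement: it is quoted with references to Hairer \cite[Proposition 4.14]{MR3274562} and to \cite[Chapter 2, Section 8]{MR2768550}, and your Bony-decomposition construction is essentially the argument of the latter reference. The existence/continuity half of your proposal is correct: the three estimates (for $T_f g$ using only $\norm{f}_{L^\infty}$, for $T_g f$ landing in $\mathcal{C}^{\alpha+\beta}_{\loc}$, and for the resonant term, where the ball--rather than annulus--frequency support makes the sum over $j\gtrsim k$ geometric precisely because $\alpha+\beta>0$) are the standard ones, and you locate the role of the hypothesis correctly. The only caveat there is bookkeeping: localization should be run through cutoffs ($v\in\mathcal{C}^{s}_{\loc}$ iff $\chi v$ lies in the global H\"older--Besov space for every $\chi\in\mathcal{D}(\R^d)$), since the blocks $\Delta_j$ are not local operators; this is routine.

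The genuine gap is the uniqueness step, and your closing caveat does not close it. Your argument must pass to the limit in $\mathcal{M}'(f_n,g_n)$ for an \emph{arbitrary} competing extension $\mathcal{M}'$, but the mollified pairs converge only in $\mathcal{C}^{\alpha'}_{\loc}\times\mathcal{C}^{\beta'}_{\loc}$ with $\alpha'<\alpha$ and $\beta'<\beta$ (note also the slip: $\beta'$ must be taken \emph{below} $\beta$, not in $[\beta,0)$, both for $g\ast\rho_n\to g$ to hold and for the embedding $\mathcal{C}^{\beta}_{\loc}\hookrightarrow\mathcal{C}^{\beta'}_{\loc}$ to point the right way), while $\mathcal{M}'$ is only assumed continuous for the $\mathcal{C}^{\alpha}\times\mathcal{C}^{\beta}$ topology. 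The uniform-in-the-scale bounds you invoke at the end are a property of the paraproduct map you constructed, not of $\mathcal{M}'$, so they cannot transfer the continuity of $\mathcal{M}'$ to the coarser topology. In fact, uniqueness in the literal sense of the quoted statement fails: the closure of the smooth functions in $\mathcal{C}^{\alpha}_{\loc}$ is the proper closed subspace of little-H\"older functions (for instance $x\mapsto\chi(x)\abs{x_1}^{\alpha}$ lies at positive distance from it on any compact set containing $0$), so Hahn--Banach yields a nonzero continuous linear functional $\Phi$ on $\mathcal{C}^{\alpha}_{\loc}(\R^d)$ vanishing on all smooth $f$, and $(f,g)\mapsto\mathcal{M}(f,g)+\Phi(f)\,g$ is a second continuous bilinear extension of the product. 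Your scheme therefore proves uniqueness only within the class of extensions continuous for the weaker $\mathcal{C}^{\alpha'}\times\mathcal{C}^{\beta'}$ topologies (equivalently, of the extension characterized by the local bound $\abs{\langle\mathcal{M}(f,g)-f(x)g,\psi^{\lambda}_{x}\rangle}\lesssim\lambda^{\alpha+\beta}$, which is how the reconstruction-theorem proof in \cite{MR3274562} pins the product down). With the uniqueness clause read in that sense your argument is complete; as written, the step is a gap rather than a detail.
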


The above theorem shows that one is quite limited in defining $|.|^{2\sigma}\hat{u}$, since $|.|^{2\sigma}\in \mathcal{C}^{2\sigma}_{loc}(\R^d)$ and hence we are restricting ourselves to the case where $\hat{u}\in\mathcal{C}^{-2\sigma+\epsilon}_{loc}(\R^d)$ where $\epsilon>0$. In the case where $u$ is a solution of the Helmholtz equation it is certainly possible that $\hat{u}\in \mathcal{C}^{-d}_{loc}(\R^d)$ (for instance the case where $\hat{u}=\delta_{\omega}$ where $\omega \in \mathbb{S}^{d-1}$). The above theorem certainly doesn't quite give us what we need to have a well-defined product. 

Nevertheless, it is definitely possible for us to give meaning to this product since, fortunately for us, the symbol of $(-\Delta)^{2\sigma}$ is smooth everywhere outside of $0$. This is why we use $\mathcal{Z}'(\R^d)$ instead of $\mathcal{S}'(\R^d)$ since one does not see the problematic behaviour of $|.|^{2\sigma}$ at the point $0$. The key observation is that whilst for $\varphi \in \mathcal{S}(\R^d)$ there is no guarantee that $\mathcal{F}^{-1}(|.|^{2\sigma}\hat{\varphi}) \in \mathcal{S}(\R^d)$, we do have that for $\varphi \in \mathcal{Z}(\R^d)$ then $\mathcal{F}^{-1}(|.|^{2\sigma}\hat{\varphi}) \in \mathcal{Z}(\R^d)$. Hence, the following definition of $(-\Delta)^\sigma:\mathcal{Z}(\R^d)'\rightarrow \mathcal{Z}'(\R^d)$:
\begin{displaymath}
    \langle(-\Delta)^\sigma u, \varphi \rangle_{\mathcal{Z}'(\R^d),\mathcal{Z}(\R^d)}:=\langle u, \mathcal{F}^{-1}(|.|^{2\sigma}\hat{\varphi}) \rangle_{\mathcal{Z}'(\R^d),\mathcal{Z}(\R^d)},
\end{displaymath}
for all $(u,\varphi)\in \mathcal{Z}'(\R^d)\times \mathcal{Z}(\R^d),$
is now well-defined. 

\begin{remark}
    There are certainly many variants of the multiplication theorem for distributions (too many to list here) but for the interested reader we direct them to \cite{caravenna2020hairers, dappiaggi2021microlocal, MR3684891, Broux2022} and references within. 
\end{remark}

\section{Pseudo-differential operators \\ induced from Bernstein functions}
\label{SEC:APPENDIX}

The aim of this section is to prove the following theorem. 
\begin{theorem}\label{thm:appendix}
Let $f$ be a Bernstein function with triple $(a,b,\mu)$.
The operator $f(-\Delta)$ defined by 
\begin{displaymath}
    \begin{split}
        f(-\Delta):&\mathcal{Z}(\R^d)\rightarrow \mathcal{Z}(\R^d),\\
        &\varphi\mapsto \mathcal{F}^{-1}(f(|.|^2)\hat{\varphi}),
    \end{split}
\end{displaymath}
is well-defined and continuous. Moreover, we have that the operator $f(-\Delta):\mathcal{Z}'(\R^d)\rightarrow \mathcal{Z}'(\R^d)$, defined by 
\begin{displaymath}
    \left \langle f(-\Delta)u,\varphi \right \rangle_{\mathcal{Z}'(\R^d),\mathcal{Z}(\R^d)}:=    \left \langle u,f(-\Delta)\varphi \right \rangle_{\mathcal{Z}'(\R^d),\mathcal{Z}(\R^d)},
\end{displaymath}
for all $(u,\varphi)\in \mathcal{Z}'(\R^d)\times \mathcal{Z}(\R^d),$ is well-defined and continuous. 
\end{theorem}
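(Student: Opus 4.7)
The plan is to prove Theorem \ref{thm:appendix} in three steps: (i) verify that the map $\varphi \mapsto \mathcal{F}^{-1}(f(|\cdot|^2)\hat{\varphi})$ sends $\mathcal{Z}(\R^d)$ into itself; (ii) check continuity of this map in the Fréchet topology inherited from $\mathcal{S}(\R^d)$; (iii) transfer to $\mathcal{Z}'(\R^d)$ by duality, which is automatic once (i)--(ii) are in place.

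For step (i), fix $\varphi \in \mathcal{Z}(\R^d)$, so $\hat{\varphi} \in \mathcal{S}(\R^d)$ with $\partial^\alpha \hat{\varphi}(0)=0$ for every $\alpha$. The symbol $f(|\cdot|^2)$ is smooth on $\R^d\setminus\{0\}$ (since $f$ is smooth on $(0,\infty)$) and continuous at $0$ with value $a$ (apply dominated convergence in the L\'evy--Khintchine representation using $|1-e^{-\lambda t}|\le (\lambda t)\wedge 1 \le (1\vee \lambda)(1\wedge t)$). The task is to show $f(|\cdot|^2)\hat{\varphi}\in \mathcal{S}(\R^d)$ and that all its derivatives vanish at $0$; once this is done, $\mathcal{F}^{-1}(f(|\cdot|^2)\hat{\varphi})$ lies in $\mathcal{S}(\R^d)$ and its Fourier transform vanishes to infinite order at $0$, placing it in $\mathcal{Z}(\R^d)$. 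For the behaviour at infinity, the L\'evy--Khintchine bound yields $f(\lambda)\le a+\bigl(b+\int (1\wedge t)\,d\mu(t)\bigr)\lambda$ for $\lambda\ge 1$, and for derivatives we differentiate under the integral to obtain $|f^{(k)}(\lambda)|\le C_k$ for $\lambda\ge 1$, $k\ge 1$. These polynomial bounds are absorbed by the rapid decay of $\hat{\varphi}$, giving Schwartz-type decay for $\partial^\alpha(f(|\cdot|^2)\hat{\varphi})$ on $\{|\xi|\ge 1\}$. For the behaviour near $0$, differentiation under the integral gives the estimate $|f^{(k)}(\lambda)|\le C_k \lambda^{-k}$ on $(0,1]$ for $k\ge 1$, which together with Fa\`a di Bruno's formula yields
\begin{equation*}
|\partial^\beta f(|\cdot|^2)(\xi)| \le C_\beta |\xi|^{-|\beta|}\quad \text{for } 0<|\xi|\le 1.
\end{equation*}
Since $\hat{\varphi}$ vanishes to infinite order at $0$ (so $|\partial^\gamma \hat{\varphi}(\xi)|\le C_{N,\gamma}|\xi|^N$ for every $N$), the Leibniz rule then shows that $\partial^\alpha (f(|\cdot|^2)\hat{\varphi})(\xi)$ extends continuously to $0$ with value $0$ and, more strongly, vanishes to infinite order there. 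This establishes both that the product is in $\mathcal{S}(\R^d)$ and that $f(-\Delta)\varphi \in \mathcal{Z}(\R^d)$.

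For step (ii), the estimates above are quantitative: tracking constants, each Schwartz seminorm of $f(|\cdot|^2)\hat{\varphi}$ is controlled by finitely many Schwartz seminorms of $\hat{\varphi}$, hence (since $\mathcal{F}$ and $\mathcal{F}^{-1}$ are continuous on $\mathcal{S}(\R^d)$) each Schwartz seminorm of $f(-\Delta)\varphi$ is controlled by Schwartz seminorms of $\varphi$. This gives continuity of $f(-\Delta):\mathcal{Z}(\R^d)\to \mathcal{Z}(\R^d)$ in the subspace topology. Step (iii) is then a standard application of transposition: for $u\in \mathcal{Z}'(\R^d)$, the linear functional $\varphi\mapsto \langle u, f(-\Delta)\varphi\rangle$ is the composition of a continuous linear map with $u$, hence continuous, so it defines an element of $\mathcal{Z}'(\R^d)$; moreover this transpose is weak-* continuous from $\mathcal{Z}'(\R^d)$ to itself.

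The main obstacle is step (i), specifically the careful analysis of $f(|\cdot|^2)\hat{\varphi}$ near the origin. The derivatives $f^{(k)}$ of a Bernstein function may blow up as $\lambda\to 0^+$, so a key point is to pair the sharp derivative bound $|f^{(k)}(\lambda)|\le C_k\lambda^{-k}$ on $(0,1]$ (obtained from the representation $f^{(k)}(\lambda)=(-1)^{k-1}\int t^k e^{-\lambda t}\,d\mu(t)$ together with $\sup_{t\ge 0} t^k e^{-\lambda t}\le (k/\lambda)^k e^{-k}$ and the control $\int_0^1 t\,d\mu(t)<\infty$) against the infinite-order vanishing of $\hat{\varphi}$ at $0$. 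Once these two ingredients are matched via Leibniz and Fa\`a di Bruno, the remaining work is bookkeeping of seminorms, and the passage to $\mathcal{Z}'(\R^d)$ is purely formal.
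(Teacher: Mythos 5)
Your proposal is correct and follows essentially the same route as the paper's own proof: the derivative formula $f^{(k)}(\lambda)=(-1)^{k-1}\int_{(0,\infty)}t^{k}e^{-\lambda t}\,\mu(dt)$, the bound $|f^{(k)}(\lambda)|\le C_k\lambda^{-k}$ near $0$ paired via Fa\`a di Bruno and Leibniz with the infinite-order vanishing of $\hat{\varphi}$ at the origin, uniform derivative bounds for $\lambda\ge 1$ to get Schwartz decay, and duality for the extension to $\mathcal{Z}'(\R^d)$. The only differences are cosmetic: you treat the full triple $(a,b,\mu)$ directly and phrase continuity through seminorm estimates, whereas the paper reduces to $(0,0,\mu)$ by linearity and argues continuity sequentially.
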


Throughout this section we assume, for simplicity, that $f$ is a Bernstein function with triple $(0,0,\mu)$ where $\mu$ is non-zero, since one can extend the result to the whole triple $(a,b,\mu)$ by linearity and continuity of $-\Delta$ on $\mathcal{Z}(\R^d)$ and $\mathcal{Z}'(\R^d).$ In fact, 
 more generally, we have that 
\begin{displaymath}
\begin{split}
    \partial^\alpha:&\mathcal{Z}(\R^d)\rightarrow \mathcal{Z}(\R^d),\\
    &\varphi \mapsto \partial^\alpha\varphi,
    \end{split}
\end{displaymath}
is a well-defined and continuous map for $\alpha \in \mathbb{N}_{0}^d. $

The proof is separated into 3 parts. 

The first part is the characterization of the derivatives of Bernstein functions which will aid us in some computations. 

The second part is focused on showing existence of smooth extensions of the function $f(|.|^2)\hat{\varphi}$ for $\varphi \in \mathcal{Z}.$

The last part focuses on the proof of Theorem \ref{thm:appendix}.  

Before we commence, an important observation on the space $\mathcal{Z}(\R^d)$ is that for $\varphi\in \mathcal{Z}(\R^d)$ we have, by the Taylor remainder theorem, that there exists a constant $C_N<\infty$ such that 
\begin{equation}\label{eqn:bounds_z}
    |\hat{\varphi}(x)|\leq C_N|x|^N, \quad \text{for all $|x|\leq 1$ and $N\in \mathbb{N}$}, 
\end{equation}
where
\begin{equation}\label{eqn:bounds_z_2}
    C_N:=\max_{|\alpha|=N}\sup_{y\in \mathbb{R}^d:|y|\leq 1}\left | \partial^\alpha\hat{\varphi}(y)\right |.
\end{equation}

For more details on the space $\mathcal{Z}(\R^d)$ and their relation to homogeneous spaces, see \cite[Chapter 1]{MR2768550} or \cite[Section 5.1.2]{MR3024598}.

\subsection{Characterization of derivatives of Bernstein functions}
We begin by characterizing the derivatives of Bernstein functions which will help us prove that $f(-\Delta):\mathcal{Z}(\R^d)\rightarrow \mathcal{Z}(\R^d)$ is well-defined. 

\begin{lemma}\label{Lemma:derivatives_bernstein}
    Then $n$-th derivative of $f$, denoted by $f^{(n)}$, is given by 
    \begin{equation}
        f^{(n)}(\lambda)=(-1)^{n-1}\int_{(0,\infty)}t^{n}e^{-\lambda t}\,\mu(dt), \quad \text{
        for $\lambda>0, n\in \mathbb{N}$}. 
    \end{equation} 
\end{lemma}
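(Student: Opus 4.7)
The plan is to prove the identity by induction on $n \in \mathbb{N}$, using differentiation under the integral sign applied to the L\'evy representation. Since the appendix restricts to the triple $(0,0,\mu)$, we start from
\begin{displaymath}
f(\lambda) = \int_{(0,\infty)} (1 - e^{-\lambda t})\, \mu(dt),\quad \lambda > 0,
\end{displaymath}
so the formal derivatives are $(-1)^{n-1}\int_{(0,\infty)} t^n e^{-\lambda t}\,\mu(dt)$. The entire task is to justify exchanging $\partial_\lambda$ with the integral, and then to iterate.

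For the base case $n=1$, I would fix an arbitrary $\lambda_0 > 0$ and restrict attention to $\lambda$ in the compact subinterval $[\lambda_0/2, 2\lambda_0]$. The formal derivative of the integrand is $t e^{-\lambda t}$. On $t \in (0,1]$ one has the uniform bound $t e^{-\lambda t} \leq t$, which is $\mu$-integrable there by the L\'evy condition $\int (1 \wedge t)\, \mu(dt) < \infty$; on $t \in [1,\infty)$ one has $t e^{-\lambda t} \leq t e^{-\lambda_0 t / 2}$, which is uniformly bounded in $\lambda$ and $\mu$-integrable since $\mu((1,\infty)) < \infty$ (also a consequence of the same L\'evy condition). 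Together these provide a single dominating function on $(0,\infty)$, and the standard differentiation-under-the-integral theorem yields the claim for $n=1$. The induction step is identical in form: assuming the formula at level $n$, differentiating once more produces the candidate $(-1)^{n} \int_{(0,\infty)} t^{n+1} e^{-\lambda t}\,\mu(dt)$, and the same split dominates the integrand uniformly on $[\lambda_0/2, 2\lambda_0]$ — on $(0,1]$ one uses $t^{n+1} \leq t$ (valid since $n \geq 0$ and $t \leq 1$), while on $[1,\infty)$ exponential decay beats the polynomial factor $t^{n+1}$, giving a constant bound depending only on $n$ and $\lambda_0$.

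There is no serious obstacle; the only care needed is bookkeeping for the dominating function, and the L\'evy condition $\int (1 \wedge t)\, \mu(dt) < \infty$ handles this cleanly through the $(1 \wedge t)$ split: near zero one factor of $t$ always survives when the integrand $t^n e^{-\lambda t}$ is compared to $t$, and beyond $t=1$ exponential decay dominates any polynomial in $t$ uniformly on compact subintervals of $(0,\infty)$.
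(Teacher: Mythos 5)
Your proposal is correct and follows essentially the same route as the paper: induction on $n$ combined with differentiation under the integral sign, with the domination supplied by the L\'evy condition $\int_{(0,\infty)}(1\wedge t)\,\mu(dt)<\infty$ via a split at $t=1$ (the paper instead dominates by $t^{k+1}e^{-\epsilon t}$ on $\lambda>\epsilon$, after first checking the integrals are finite). The only cosmetic difference is that you prove the base case $n=1$ directly from the L\'evy representation, while the paper cites it from Schilling--Song--Vondra\v{c}ek; both steps are the same computation.
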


\begin{proof}
    First we check that the integral on the right hand side is well-defined. Observe that, for $\lambda>0$ and $n\in \mathbb{N}$, we have 
    \begin{displaymath}
        \int_{[0,\infty)}t^{n-1}e^{-\lambda t}\,\mu(dt)=\int_{(0,1]}t^{n}e^{-\lambda t}\,\mu(dt)+\int_{(1,\infty)}t^{n}e^{-\lambda t}\,\mu(dt).
    \end{displaymath}
    Note that 
    \begin{displaymath}
        \sup_{t>0}t^{n-1}e^{-\lambda t}<\infty, \quad \text{for all $\lambda>0$ and $n\in \mathbb{N}$}.
    \end{displaymath}
    Hence, we have that 
    \begin{displaymath}
        \int_{(0,\infty)}t^{n}e^{-\lambda t}\,\mu(dt)\leq \left (\sup_{t\in (0,1]}t^{n-1}e^{-\lambda t}\right )\int_{(0,1]}t \,\mu(dt)+\left (\sup_{t>1}t^{n}e^{-\lambda t}\right )\mu((1,\infty)).
    \end{displaymath}
    Using the fact that $\int_{(0,\infty)}(1\wedge t) \,\mu(dt)<\infty$ shows us that the integrals are well defined. Now let us identify the integrals. 
    
    The case $n=1$ is done in \cite[page 22]{MR2978140}. We proceed by an induction argument. Suppose the result is true for $n=k\in \mathbb{N}.$ Specifically, 
    \begin{displaymath}
        f^{(k)}(\lambda)=(-1)^{k-1}\int_{(0,\infty)}t^{k}e^{-\lambda t}\,\mu(dt), \quad \lambda>0.
    \end{displaymath}
    Observe that for $\lambda >\epsilon>0$, we have
    \begin{displaymath}
        \left | \frac{d}{d\lambda}\left (t^{k}e^{-\lambda t}\right ) \right |=\left | t^{k+1}e^{-\lambda t}\right |\leq t^{k+1}e^{-\epsilon t}, \quad t>0. 
    \end{displaymath}
    Hence, by differentiability of parameter integrals, we have that 
    \begin{displaymath}
        f^{(k+1)}(\lambda)=(-1)^{k}\int_{(0,\infty)}t^{k+1}e^{-\lambda t}\,\mu(dt), \quad \text{for all $\lambda>\epsilon$.}
    \end{displaymath}
    Since $\epsilon>0$ is arbitrary, we have our result by induction. 
\end{proof}

\begin{lemma}\label{Lemma:derivative_bounds}
    \begin{displaymath}
        \sup_{\lambda\in (0,1)}|\lambda^nf^{(n)}(\lambda)|<\infty, \quad \text{for all $n\in \mathbb{N}_0.$}
    \end{displaymath}
\end{lemma}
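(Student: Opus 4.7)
The plan is to use the integral representation in Lemma \ref{Lemma:derivatives_bernstein} and bound the resulting integrals by splitting the domain according to where $t$ is small or large. The case $n=0$ will be handled separately using the L\'evy triple representation of $f$ itself, and for $n\geq 1$ the computation reduces to two elementary estimates on $(0,1]$ and $(1,\infty)$.

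More concretely, for $n\geq 1$, Lemma \ref{Lemma:derivatives_bernstein} gives
\begin{displaymath}
|\lambda^n f^{(n)}(\lambda)|\leq \int_{(0,1]}\lambda^n t^n e^{-\lambda t}\,\mu(dt)+\int_{(1,\infty)}\lambda^n t^n e^{-\lambda t}\,\mu(dt),
\end{displaymath}
and I would handle each piece as follows. On $(0,1]$, since $\lambda\in(0,1)$ and $n\geq 1$, we have $\lambda^n t^n\leq \lambda t$ and $e^{-\lambda t}\leq 1$, so the first integral is dominated by $\lambda\int_{(0,1]}t\,\mu(dt)$, which is finite by the L\'evy integrability condition $\int_{(0,\infty)}(1\wedge t)\,\mu(dt)<\infty$. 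On $(1,\infty)$, I would perform the change of variable $s=\lambda t$ in the integrand itself, yielding $\lambda^n t^n e^{-\lambda t}=s^n e^{-s}\leq (n/e)^n$ uniformly in $\lambda$, so the second integral is bounded by $(n/e)^n\mu((1,\infty))<\infty$.

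For the case $n=0$, since we may assume without loss of generality that $(a,b,\mu)=(0,0,\mu)$, I would directly bound $f(\lambda)=\int_{(0,\infty)}(1-e^{-\lambda t})\,\mu(dt)$ using the pointwise estimate $1-e^{-\lambda t}\leq \min(\lambda t,1)$, splitting again into $(0,1]$ and $(1,\infty)$. This gives $f(\lambda)\leq \lambda\int_{(0,1]}t\,\mu(dt)+\mu((1,\infty))$, finite and uniformly bounded for $\lambda\in(0,1)$.

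Honestly, there is no serious obstacle here: the whole argument rests on the identity $\lambda^n t^n e^{-\lambda t}=(\lambda t)^n e^{-\lambda t}$, which is uniformly bounded by $(n/e)^n$ on $(0,\infty)\times(0,\infty)$, combined with the L\'evy integrability of $\mu$ near zero. The only thing to be careful about is ensuring the bound on $(0,1]$ produces an integrand dominated by $t$ (so that the integrability of $t$ near zero can be used), which is why the change of variable is reserved for the tail $(1,\infty)$ where $\mu$ is finite and no integrability near the origin is needed.
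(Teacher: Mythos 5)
Your proposal is correct and follows essentially the same route as the paper: it uses the integral representation from Lemma \ref{Lemma:derivatives_bernstein}, splits the integral at $t=1$, bounds the integrand by $t$ near the origin (using $\lambda t\le 1$ and the L\'evy condition) and by $\sup_{s>0}s^n e^{-s}=n^n e^{-n}$ on the tail. The only cosmetic difference is that you treat $n=0$ via the representation of $f$ with $(a,b)=(0,0)$, whereas the paper simply notes this case is trivial (e.g.\ by monotonicity of $f$); both are fine under the appendix's standing assumptions.
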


\begin{proof}
The case $n=0$ is trivial so we assume that $n\in \mathbb{N}.$

From Lemma \ref{Lemma:derivatives_bernstein} we have that 
\begin{align*}
    |\lambda^nf^{(n)}(\lambda)|&\leq \int_{(0,\infty)}(\lambda t)^ne^{-\lambda t}\,\mu(dt)\\
    &=\int_{(0,1]}(\lambda t)^ne^{-\lambda t}\,\mu(dt)+\int_{(1,\infty)}(\lambda t)^ne^{-\lambda t}\,\mu(dt).
\end{align*}
Note that
\begin{displaymath}
    \sup_{t>0}(\lambda t)^ne^{-\lambda t}=(\lambda t)^ne^{-\lambda t}|_{t=n/\lambda}=n^n e^{-n}. 
\end{displaymath}
Moreover, the function $t\mapsto (\lambda t)^ne^{-\lambda t}$ is increasing on $(0,n/\lambda]$. 
Hence,
\begin{displaymath}
    |\lambda^nf^{(n)}(\lambda)|\leq \int_{(0,1]}t\,\mu(dt)+n^ne^{-n}\mu((1,\infty)),
\end{displaymath}
for $\lambda\in (0,1).$
\end{proof}

\subsection{Existence of smooth extensions}

We now focus on establishing the existence of the smooth extension of $f(|.|^2)\hat{\varphi}$ for $\varphi \in \mathcal{Z}(\R^d).$ Since, it is obvious that $f(|.|^2)\hat{\varphi}$ is smooth away from $0$ we only focus on establishing the smooth extension to $0.$ 

The following lemmata will aid us in some computations. 

\begin{lemma}\label{Lemma:limit}
We have that 
\begin{displaymath}
    \lim_{x\rightarrow 0}\partial^\beta(f(|x|^2))\hat{\varphi}(x)=0,\quad \text{for all $\beta \in \mathbb{N}_0^d$, and $\varphi\in \mathcal{Z}(\R^d)$}. 
\end{displaymath}
\end{lemma}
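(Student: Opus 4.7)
The plan is to exploit the fact that $\hat{\varphi}$ vanishes to infinite order at the origin (by \eqref{eqn:bounds_z}) and show that $\partial^\beta(f(|x|^2))$ can only blow up at $0$ at a rate controlled by $|x|^{-|\beta|}$, so the product still tends to zero.

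First I would establish, by induction on $|\beta|$, a Fa\`a di Bruno-type structural formula
\begin{displaymath}
    \partial^\beta\bigl(f(|x|^2)\bigr) = \sum_{k=\lceil |\beta|/2\rceil}^{|\beta|} f^{(k)}(|x|^2)\, P_{\beta,k}(x),
\end{displaymath}
where $P_{\beta,k}$ is a polynomial in $x$ of degree $2k-|\beta|$. The combinatorial intuition is simply that every time a derivative lands on a factor of the form $f^{(j)}(|x|^2)$ it produces an extra linear factor $2x_i$ and bumps $j$ to $j+1$; every time a derivative lands on a polynomial factor it lowers the degree by one. So to produce a $k$-th derivative of $f$ we must have used $k$ derivatives on the $f$-factor (yielding $k$ linear terms) and the remaining $|\beta|-k$ derivatives reduce the degree of this polynomial part from $k$ down to $2k - |\beta|$, which forces $2k \geq |\beta|$.

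Next, I would apply the two key estimates at my disposal. From Lemma \ref{Lemma:derivative_bounds}, for each $k\in\mathbb{N}_0$ there is $M_k<\infty$ with
\begin{displaymath}
    \bigl|f^{(k)}(|x|^2)\bigr| \leq \frac{M_k}{|x|^{2k}}\quad\text{for all $0<|x|<1$,}
\end{displaymath}
while from \eqref{eqn:bounds_z}, for every $N\in\mathbb{N}$ there exists $C_N<\infty$ such that $|\hat{\varphi}(x)|\leq C_N|x|^N$ for $|x|\leq 1$. Since $|P_{\beta,k}(x)|\lesssim |x|^{2k-|\beta|}$ near $0$, each term in the expansion satisfies
\begin{displaymath}
    \bigl|f^{(k)}(|x|^2)\, P_{\beta,k}(x)\,\hat{\varphi}(x)\bigr| \lesssim |x|^{-2k}\cdot |x|^{2k-|\beta|}\cdot |x|^N = |x|^{N-|\beta|}.
\end{displaymath}
Choosing $N>|\beta|$ (which is legitimate because \eqref{eqn:bounds_z} holds for arbitrarily large $N$), the bound tends to $0$ as $x\to 0$. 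Summing finitely many such terms yields the desired limit. The case $|\beta|=0$ is separate but easier, since $f(|x|^2)$ extends continuously to $0$ and $\hat{\varphi}(0)=0$.

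The only real obstacle is the bookkeeping in the induction step for the structural formula; everything else is a straightforward balance of powers. I would record the induction cleanly by writing $\partial^\beta f(|x|^2) = \partial_i \bigl[\partial^{\beta-e_i}f(|x|^2)\bigr]$ for some index $i$ with $\beta_i\geq 1$ and checking that differentiating any summand $f^{(k)}(|x|^2)P_{\beta-e_i,k}(x)$ produces either $f^{(k+1)}(|x|^2)\cdot 2x_i P_{\beta-e_i,k}(x)$ (a polynomial of degree $2(k+1)-|\beta|$) or $f^{(k)}(|x|^2)\cdot \partial_i P_{\beta-e_i,k}(x)$ (a polynomial of degree $2k-|\beta|$), matching the claimed degrees and range of $k$.
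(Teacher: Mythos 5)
Your proof is correct and follows essentially the same route as the paper: expand $\partial^\beta\bigl(f(|x|^2)\bigr)$ by a Fa\`a di Bruno-type formula, control $f^{(k)}(|x|^2)$ near the origin via Lemma \ref{Lemma:derivative_bounds} (giving the $|x|^{-2k}$ rate), and absorb the blow-up using the infinite-order vanishing \eqref{eqn:bounds_z} of $\hat{\varphi}$ at $0$. In fact your structural formula, keeping the full sum over $k$ with homogeneous polynomial factors of degree $2k-|\beta|$, is the careful version of the paper's compressed identity \eqref{eqn:leibniz}, which records only the top-order derivative term; the power-counting conclusion is the same.
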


\begin{proof}

It will be convenient for us to consider a more explicit form of $\partial^\beta(f(|x|^2))$. 

For $\beta_i\in \mathbb{N}_0$ we have, by the Fa\'a di Bruno formula, see \cite[Lemma 2.3]{MR2768550}, that 
    \begin{displaymath}
        \partial_{x_i}^{\beta_i}f(|x|^2)=p_{\beta_i}(x_i)f^{(\beta_i)}(|x|^2),
    \end{displaymath}
    for $(x_1,..,x_d)=x\in \mathbb{R}^d\setminus\{0\}$ and $\beta_i\in \mathbb{N}_0$, where $p_{\beta_i}$ is a polynomial of degree $\beta_i$. Hence, we have 
    \begin{equation}\label{eqn:leibniz}
        \partial^\beta f(|x|^2)=\partial_{x_1}^{\beta_1}...\partial_{x_d}^{\beta_d}f(|x|^2)=f^{(|\beta|)}(|x|^2)\prod_{i=1}^{d}p_{\beta_i}(x_i),
    \end{equation}
    for $(x_1,..,x_d)=x\in \mathbb{R}^d\setminus\{0\}$.
    From this, we have that 
    \begin{displaymath}
        |\partial^\beta(f(|x|^2))\hat{\varphi}(x)|=\left |\prod_{i=1}^{d}p_{\beta_i}(x_i)\right ||f^{(|\beta|)}(|x|^2)\hat{\varphi}(x)|. 
    \end{displaymath}
    However, from Lemma \ref{Lemma:derivative_bounds}, we have that 
    \begin{displaymath}
        |f^{(|\beta|)}(|x|^2)|
        \leq K|x|^{-2|\beta|}, \quad \text{for all $|x|\in (0,1)$}, 
    \end{displaymath}
    for some finite constant $K>0$. Moreover, from \eqref{eqn:bounds_z}, we have that 
    \begin{displaymath}
        |\hat{\varphi}(x)f^{(|\beta|)}(|x|^2)|\leq KC_{2\beta+1}|x|, \quad \text{for all $|x|\in (0,1)$},
    \end{displaymath}
    where $C_{2\beta+1}>0$ is also a finite constant which give us our result.

\end{proof}

\begin{lemma}\label{Lemma:smooth_extension}
    For $\varphi \in \mathcal{Z}(\R^d)$ we have that $f(|.|^2)\hat{\varphi}$ has a smooth extension at $0$. Specifically, we have that 
    \begin{displaymath}
        \lim_{x\rightarrow 0}\partial^\beta(f(|x|^2)\hat{\varphi}(x))=0, \quad \text{for all $\beta \in \mathbb{N}_0^d$.}
    \end{displaymath}
\end{lemma}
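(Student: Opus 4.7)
The plan is to expand $\partial^\beta(f(|x|^2)\hat\varphi(x))$ via the Leibniz rule and reduce each summand to a limit of the type already treated in Lemma~\ref{Lemma:limit}. By the Leibniz rule,
\[
\partial^\beta\!\left(f(|x|^2)\hat\varphi(x)\right) = \sum_{\alpha \leq \beta} \binom{\beta}{\alpha}\, \partial^\alpha(f(|x|^2)) \,\partial^{\beta-\alpha}\hat\varphi(x),
\]
so it suffices to show that each term $\partial^\alpha(f(|x|^2))\,\partial^{\beta-\alpha}\hat\varphi(x)$ tends to $0$ as $x \to 0$.

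\textbf{Key observation.} Since $\varphi \in \mathcal{Z}(\R^d)$, every partial derivative of $\hat\varphi$ also vanishes to infinite order at the origin: for any $\gamma \in \mathbb{N}_0^d$ and any $\alpha' \in \mathbb{N}_0^d$, one has $\partial^{\alpha'}(\partial^\gamma\hat\varphi)(0) = \partial^{\alpha'+\gamma}\hat\varphi(0) = 0$. Hence by Taylor's theorem, in analogy with \eqref{eqn:bounds_z}--\eqref{eqn:bounds_z_2}, for every $N \in \mathbb{N}$ and every multi-index $\gamma$ there is a finite constant $C_{N,\gamma}$ such that
\[
|\partial^\gamma\hat\varphi(x)| \leq C_{N,\gamma}|x|^N, \quad |x|\leq 1.
\]
Consequently, the proof of Lemma~\ref{Lemma:limit}, which used only this Taylor estimate on $\hat\varphi$ together with the decomposition \eqref{eqn:leibniz} and the bound $|f^{(|\alpha|)}(|x|^2)| \leq K|x|^{-2|\alpha|}$ from Lemma~\ref{Lemma:derivative_bounds}, carries over verbatim with $\hat\varphi$ replaced by $\partial^{\beta-\alpha}\hat\varphi$. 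Choosing $N$ sufficiently large (e.g.\ $N = 2|\alpha|+1$) yields $|\partial^\alpha(f(|x|^2))\,\partial^{\beta-\alpha}\hat\varphi(x)| \leq K'|x|$ on $0<|x|<1$, hence the limit is $0$. Summing over $\alpha\leq\beta$ gives the claimed limit identity.

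\textbf{From limits to smoothness.} Denote by $F$ the extension of $f(|\cdot|^2)\hat\varphi$ to $\R^d$ obtained by setting $F(0):=0$; by the $\beta = 0$ case above, $F$ is continuous at the origin. To upgrade this to $F\in C^\infty(\R^d)$ I would proceed by induction on $|\beta|$: assuming $\partial^\alpha F$ has been shown to exist and be continuous on all of $\R^d$ for $|\alpha|\leq k$, and using that $\partial_i\partial^\alpha F$ (computed classically on $\R^d\setminus\{0\}$) extends continuously to $0$ with value $0$ by the limit just established, a one-dimensional mean value theorem along the $i$-th axis,
\[
\frac{\partial^\alpha F(he_i)-\partial^\alpha F(0)}{h} = \partial_i\partial^\alpha F(\xi_h e_i),\quad \xi_h\text{ between }0\text{ and }h,
\]
shows that the partial derivative of $\partial^\alpha F$ at $0$ exists and coincides with the continuous extension. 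This completes the inductive step.

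\textbf{Main obstacle.} The computational heart of the argument, the bound on $\partial^\alpha(f(|x|^2))\,\partial^{\beta-\alpha}\hat\varphi(x)$, is essentially a bookkeeping exercise given Lemmas~\ref{Lemma:derivative_bounds} and~\ref{Lemma:limit}. The genuine subtlety lies in recognising that all \emph{derivatives} of $\hat\varphi$, and not merely $\hat\varphi$ itself, satisfy the vanishing of infinite order at $0$; once this is noted, the rest is routine. The extension-to-$C^\infty$ step, though standard, is where one must be careful not to confuse the continuous extension of the classical derivative with the actual distributional derivative of the extension, and the MVT argument above is the cleanest way to reconcile the two.
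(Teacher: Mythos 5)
Your proposal is correct and follows essentially the same route as the paper: expand $\partial^\beta(f(|x|^2)\hat\varphi(x))$ by the multi-variable Leibniz rule and apply Lemma~\ref{Lemma:limit} to each term, using that $\partial^{\beta-\alpha}\hat\varphi$ is again the Fourier transform of an element of $\mathcal{Z}(\R^d)$ (i.e.\ vanishes to infinite order at $0$). Your additional mean-value-theorem induction, upgrading the vanishing limits of the derivatives to genuine $C^\infty$ regularity of the extension, is a correct elaboration of a step the paper leaves implicit.
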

\begin{proof}
It is clear that $f(|.|^2)\hat{\varphi}$ is smooth on $\mathbb{R}^d\setminus\{0\}.$ However, to show the existence of a continuous extension of the partial derivatives at $0$ it will be useful for us to compute them explicitly. 

    By the multi-variable Leibniz rule, we have that 
\begin{displaymath}
    \partial^\beta(f(|x|^2)\hat{\varphi}(x))=\sum_{\alpha;\alpha \leq \beta} \binom{\beta}{\alpha} \partial^\alpha (f(|x|^2)) \partial^{\beta-\alpha} \hat{\varphi}(x),
\end{displaymath}
for $x\in \mathbb{R}^d\setminus \{0\}$ and $\beta\in \mathbb{N}_0^d$. However, since $\mathcal{F}^{-1}(\partial^{\beta-\alpha} \hat{\varphi})\in \mathcal{Z}(\R^d)$, we have our result by Lemma \ref{Lemma:limit}.
\end{proof}

\subsection{Proof of Theorem \ref{thm:appendix}}

\begin{proof}[Proof of Theorem \ref{thm:appendix}]
    To show that $f(-\Delta)\varphi \in \mathcal{Z}(\R^d)$ if $\varphi \in \mathcal{Z}(\R^d)$ it is sufficient for us to prove that $f(-\Delta)\varphi\in \mathcal{Z}(\R^d)$ since, if this is the case, we have from Lemma \ref{Lemma:smooth_extension} that $f(-\Delta)\varphi\in \mathcal{Z}(\R^d)$. By using the fact that $\mathcal{F}:\mathcal{S}(\R^d)\rightarrow \mathcal{S}(\R^d)$ is a continuous isomorphism it is sufficient for us to show that  
\begin{displaymath}
    \|f(|.|^2)\hat{\varphi}\|_{\alpha,\beta}:=\sup_{x\in \mathbb{R}^d} |x^\alpha\partial^\beta(f(|x|^2)\hat{\varphi}(x))|<\infty,
\end{displaymath}
for all $\varphi\in \mathcal{Z}(\R^d)$ and $\alpha,\beta\in \mathbb{N}_0^d$ since, if $\varphi \in \mathcal{Z}(\R^d)$ then $\mathcal{F}^{-1}(\partial^\alpha\hat{\varphi})\in \mathcal{Z}(\R^d)$ for $\alpha\in \mathbb{N}_0^d$. 

Note that since $\varphi \in \mathcal{Z}(\R^d)$ we have that 
\begin{displaymath}
    \sup_{x\in \mathbb{R}^d,|x|\leq 1} |x^\alpha\partial^\beta(f(|x|^2)\hat{\varphi}(x))|<\infty, \quad \text{for all $\alpha,\beta\in \mathbb{N}_0^d$},
\end{displaymath}
since $f(|.|^2)\varphi$ is smooth on $\mathbb{R}^d$, by Lemma \ref{Lemma:smooth_extension}.

We now want to show that, for $\varphi \in \mathcal{Z}(\R^d)$,
\begin{displaymath}
    \sup_{x\in \mathbb{R}^d,|x|\geq 1} |x^\alpha\partial^\beta(f(|x|^2)\hat{\varphi}(x))|<\infty, \quad \text{for all $\alpha,\beta\in \mathbb{N}_0^d$}.
\end{displaymath}

By the multi-variable Leibniz rule, as in the proof of Lemma \ref{Lemma:smooth_extension}, it is sufficient to prove that 

\begin{displaymath}
    \sup_{x\in \mathbb{R}^d,|x|\geq 1} |x^\alpha\partial^\beta(f(|x|^2))\hat{\psi}(x)|<\infty, \quad \text{for all $\alpha,\beta\in \mathbb{N}_0^d$ and $\psi\in \mathcal{Z}(\R^d)$}.
\end{displaymath}

Note that from \eqref{eqn:leibniz} we have that
\begin{equation}\label{eqn:ineq}
\begin{split}
    &\sup_{x\in \mathbb{R}^d:|x|>1}|x^\alpha\partial^\beta(f(|x|^2)\hat{\varphi}(x))| \\ 
    &\leq  \sup_{x\in \mathbb{R}^d:|x|>1} \left | x^\alpha \prod_{i=1}^{d}p_{\beta_i}(x_i) \hat{\varphi}(x)\right |\sup_{x\in \mathbb{R}^d:|x|>1}\left |f^{(|\beta|)}(|x|^2)\right | \\ 
    &\leq |f^{(|\beta|)}(1)| \sup_{x\in \mathbb{R}^d:|x|>1}\left | x^\alpha \prod_{i=1}^{d}p_{\beta_i}(x_i) \hat{\varphi}(x)\right |<\infty, 
    \end{split}
\end{equation}
since $\hat{\varphi}\in \mathcal{Z}(\R^d)$.
Hence, we have that $\varphi \in \mathcal{Z}(\R^d)$ then $f(-\Delta)\varphi \in \mathcal{Z}(\R^d)$.

We now focus on continuity of $f(-\Delta)$. 

Note, by duality, that continuity of $f(-\Delta):\mathcal{Z}(\R^d)\rightarrow \mathcal{Z}(\R^d)$ implies continuity of $f(-\Delta):\mathcal{Z}'(\R^d)\rightarrow \mathcal{Z}'(\R^d)$. Hence, we focus on proving the continuity of $f(-\Delta):\mathcal{Z}(\R^d)\rightarrow \mathcal{Z}(\R^d)$.

    Specifically, we show that if $\{\varphi_n\}_{n\in \mathbb{N}}\subset \mathcal{Z}(\R^d)$ such that $\lim_{n\rightarrow \infty}\varphi_{n}=0$ in $\mathcal{Z}(\R^d)$ then $\lim_{n\rightarrow \infty}f(-\Delta)\varphi_{n}=0$ in $\mathcal{Z}(\R^d)$. 
    
    By continuity of the fourier transform on $\mathcal{S}(\R^d)$, we have that this is equivalent to showing that 
    $\lim_{n\rightarrow \infty}f(|.|^2)\hat{\varphi}_{n}=0$ in $\mathcal{S}(\R^d)$.
    Specifically, we wish to show that 
    \begin{displaymath}
        \lim_{n\rightarrow \infty}\|f(|.|^2)\hat{\varphi}_n\|_{\alpha,\beta}=0, \quad \text{for all $\alpha,\beta \in \mathbb{N}_0^d$.}   
    \end{displaymath}

    By Leibniz rule, as in the proof of Lemma \ref{Lemma:smooth_extension}, it is sufficient to prove that 
\begin{displaymath}
    \lim_{n\rightarrow \infty}\sup_{x\in \mathbb{R}^d} |x^\alpha\partial^\beta(f(|x|^2))\hat{\psi}_n(x)|=0, \quad \text{for all $\alpha,\beta\in \mathbb{N}_0^d$},
\end{displaymath}
where $\lim_{n\rightarrow \infty}\psi_n=0$ in $\mathcal{Z}(\R^d)$. 

To begin, we consider the limit 
\begin{displaymath}
    \lim_{n\rightarrow \infty}\sup_{x\in \mathbb{R}^d, |x|\leq 1} |x^\alpha\partial^\beta(f(|x|^2))\hat{\psi}_n(x)|, \quad \text{for all $\alpha,\beta \in \mathbb{N}_0^d.$}
\end{displaymath}

Note that from \eqref{eqn:leibniz} and Lemma \ref{Lemma:derivatives_bernstein} we have that
\begin{align*}
    &\sup_{x\in \mathbb{R}^d:|x|\leq 1}|x^\alpha\partial^\beta(f(|x|^2)\hat{\psi}_n(x))|,\\
    &=  \sup_{x\in \mathbb{R}^d:|x|\leq 1} \left | x^\alpha f^{(|\beta|)}(|x|^2)\prod_{i=1}^{d}p_{\beta_i}(x_i)\right|\left |\hat{\psi}_n(x)\right |,\\
    &\leq \sup_{x\in \mathbb{R}^d:|x|\leq 1}  \left ||x|^{|\alpha|}f^{(|\beta|)}(|x|^2) \right| \left |\prod_{i=1}^{d}p_{\beta_i}(x_i)\right|\left |\hat{\psi}_n(x)\right |,\\
    &\leq \sup_{x\in \mathbb{R}^d:|x|\leq 1}  \left ||x|^{2|\beta|}f^{(|\beta|)}(|x|^2) \right| \left |\prod_{i=1}^{d}p_{\beta_i}(x_i)\right|\left |x^{|\alpha|-2|\beta|}\hat{\psi}_n(x)\right |,\\
    &\leq \sup_{x\in \mathbb{R}^d:|x|\leq 1}\left |x^{|\alpha|-2|\beta|}\hat{\psi}_n(x)\right | \sup_{x\in \mathbb{R}^d:|x|\leq 1}  \left ||x|^{2|\beta|}f^{(|\beta|)}(|x|^2) \right| \left |\prod_{i=1}^{d}p_{\beta_i}(x_i)\right|. 
\end{align*}

Observe that 
    \begin{align*}
        &\sup_{x\in \mathbb{R}^d:|x|\leq 1}  \left ||x|^{2|\beta|}f^{(|\beta|)}(|x|^2) \right| \left |\prod_{i=1}^{d}p_{\beta_i}(x_i)\right|\\
        &\leq         \sup_{x\in \mathbb{R}^d:|x|\leq 1}  \left ||x|^{2|\beta|}f^{(|\beta|)}(|x|^2) \right|  \sup_{x\in \mathbb{R}^d:|x|\leq 1} \left |\prod_{i=1}^{d}p_{\beta_i}(x_i)\right|<\infty,
    \end{align*}
    by Lemma \ref{Lemma:derivative_bounds}. If $|\alpha|\geq 2|\beta|$ then we have that 
    \begin{displaymath}
       \lim_{n\rightarrow \infty}\sup_{x\in \mathbb{R}^d:|x|\leq 1}\left  |x^{|\alpha|-2|\beta|}\hat{\psi}_n(x)\right |=0,
    \end{displaymath}
    since $\hat{\psi}_{n}\rightarrow 0$ in $\mathcal{S}.$ Otherwise, if $|\alpha|<2|\beta|$ then
    \begin{displaymath}
        \sup_{x\in \mathbb{R}^d:|x|\leq 1}\left  |x^{|\alpha|-2|\beta|}\hat{\psi}_n(x)\right |\leq \max_{|\delta|=2|\beta|-|\alpha|+1}\sup_{x\in \mathbb{R}^d:|x|\leq 1}\left | \partial^\delta\hat{\psi}_n(x)\right |,
    \end{displaymath}
    which implies that 
    \begin{displaymath}
       \lim_{n\rightarrow \infty}\sup_{x\in \mathbb{R}^d:|x|\leq 1}\left  |x^{|\alpha|-2|\beta|}\hat{\psi}_n(x)\right |=0,
    \end{displaymath}
    hence
    \begin{displaymath}
    \lim_{n\rightarrow \infty}\sup_{x\in \mathbb{R}^d, |x|\leq 1} |x^\alpha\partial^\beta(f(|x|^2))\hat{\psi}_n(x)|=0, \quad \text{for all $\alpha,\beta \in \mathbb{N}_0^d.$}
\end{displaymath}
The fact that 
    \begin{displaymath}
    \lim_{n\rightarrow \infty}\sup_{x\in \mathbb{R}^d, |x|> 1} |x^\alpha\partial^\beta(f(|x|^2))\hat{\psi}_n(x)|=0, \quad \text{for all $\alpha,\beta \in \mathbb{N}_0^d,$}
\end{displaymath}
follows from \eqref{eqn:ineq} and $\hat{\varphi}_{n}\rightarrow 0$ in $\mathcal{S}(\R^d)$. 
\end{proof}

\bibliographystyle{plain}
% \bibliography{bib.bib}

\end{document}